\documentclass[12pt, reqno]{amsart}
\usepackage{amsmath, amsthm, amscd, amsfonts, amssymb, graphicx, color}
\usepackage{mathrsfs}
\usepackage{graphicx}
\usepackage{caption}
\usepackage{subcaption}
\usepackage[bookmarksnumbered, colorlinks, plainpages]{hyperref}
\hypersetup{colorlinks=true,linkcolor=red, anchorcolor=green, citecolor=cyan, urlcolor=red, filecolor=magenta, pdftoolbar=true}

\newtheorem{theorem}{Theorem}[section]
\newtheorem{Lemma}[theorem]{Lemma}

\newtheorem{Corollary}[theorem]{Corollary}

\newtheorem{example}[theorem]{Example}

\theoremstyle{remark}
\newtheorem{remark}[theorem]{\bf{Remark}}
\numberwithin{equation}{section}
\allowdisplaybreaks
\begin{document}

\title[]{On posinormality of weighted composition-differentiation operators on $H^2(\mathbb{D})$}

\author[G. Hait, S. Ojha, N. Ghosh, R. Birbonshi]{Gour Hait, Sarita Ojha, Nirupam Ghosh, Riddhick Birbonshi}
	
\address[Hait]{Department of Mathematics, Indian Institute of Engineering Science and Technology, Shibpur, Howrah 711103, West Bengal, India}
\email{chandragour.math@gmail.com}
	
\address[Ojha]{Department of Mathematics, Indian Institute of Engineering Science and Technology, Shibpur, Howrah 711103, West Bengal, India}
\email{sarita.ojha89@gmail.com}

\address[Ghosh] {Department of Mathematics, Indian Institute of Engineering Science and Technology, Shibpur, Howrah 711103, West Bengal, India}
\email{nirupamghoshmath@gmail.com}

\address[Birbonshi]{Department of Mathematics, Jadavpur University, Kolkata 700032, West Bengal, India.}
\email{riddhick.math@gmail.com}

\subjclass[2020]{47B38, 30H10, 47B33}

\keywords{Weighted composition-differentiation operator, Posinormal operator, Hardy space}

\begin{abstract} 
	 In this article, the posinormality and coposinormality of weighted composition-differentiation operators on Hardy space $H^2(\mathbb{D})$ are investigated. It is observed that while a composition-differentiation operator $D_{\phi,n}$ fails to be posinormal, the weighted composition-differentiation operator $D_{\psi,\phi,n}$ can be posinormal for specific choices of $\psi, \phi$. Some necessary conditions are obtained for posinormality and coposinormality of the operator $D_{\psi,\phi,n}$. Furthermore, the adjoint formula for this operator is derived which also helped us to examine some results regarding posinormality of this operator.
\end{abstract}

\maketitle	

\section{Introduction}
Let $\mathbb{D}$ denote the open unit disk in the complex plane. The Hardy space $H^2 (\mathbb{D})$ is the Hilbert space consisting of all analytic functions $f(z)=\sum\limits_{n=0}^\infty a_nz^n$ on $\mathbb{D}$ equipped with the norm
\begin{equation*}
    ||f||_2=\sqrt{\sum_{n=0}^\infty|a_n|^2}<\infty.
\end{equation*}
For $f(z)=\sum\limits_{n=0}^\infty a_nz^n$ and $g(z)=\sum\limits_{n=0}^\infty b_nz^n$ in $H^2(\mathbb{D})$, their inner product is defined as 
\begin{equation*}
    \langle f,g \rangle=\sum_{n=0}^\infty a_n\overline{b_n}.
\end{equation*}
Let $H^\infty(\mathbb{D})$ denote the space of all bounded analytic functions on $\mathbb{D}$, with $||f||_\infty=\mbox{sup}\{|f(z)|:z\in\mathbb{D\}}$.\\

For any $w\in\mathbb{D}$, the function 
\begin{equation}\label{K_w}
    K_w(z)=\frac{1}{1-\overline{w}z}, \ z\in\mathbb{D}
\end{equation} 
is the reproducing kernel function for the point-evaluation in $H^2(\mathbb{D})$, i.e., 
$$\langle f,K_w\rangle=f(w)$$ 
for any $f\in H^2(\mathbb{D})$. In a similar manner, for any natural number $n$ and $w\in\mathbb{D}$, the function 
\begin{equation}\label{K_w ^n}
   K_w^{[n]}(z)=\frac{n!z^n}{(1-\overline{w}z)^{n+1}}, \ z\in\mathbb{D} 
\end{equation}
serves as the reproducing kernel for point-evaluation of the $n$-th derivative of $f$ at the point $w$, i.e.,
    $$\langle f,K_w^{[n]}\rangle=f^{(n)}(w)$$ 
for any $f\in H^2(\mathbb{D})$~\cite[Theorem 2.16]{cowen1995composition}. 

For an analytic map $\phi:\mathbb{D}\to\mathbb{D}$, the composition operator $C_{\phi}:H^2(\mathbb{D})\to H^2(\mathbb{D})$ is defined by 
\begin{equation*}
    (C_{\phi}f)(z)=f(\phi(z))
\end{equation*}
for all $f\in H^2(\mathbb{D})$. From Corollary 3.7 of \cite{cowen1995composition}, it follows that every composition operator is bounded on $H^2(\mathbb{D})$. For a function $\psi\in H^{\infty}(\mathbb{D})$, the Toeplitz operator $T_{\psi}$ on $H^2(\mathbb{D})$ is defined by
\begin{equation*}
    T_{\psi}f=\psi\cdot f
\end{equation*}
for all $f\in H^2(\mathbb{D})$. Every Toeplitz operator is bounded on  $H^2(\mathbb{D})$ with $||T_\psi||=||\psi||_{\infty}$ (see \cite[Theorem 5]{vukotic2003analytic}). For notational convenience, we denote the operator $T_{\psi}$ by $T_z$ when $\psi(z)=z$ on $\mathbb{D}$.

Though the differentiation operator $D(f)=f'$ is unbounded on $H^2(\mathbb{D})$, there are many analytic self-maps $\phi:\mathbb{D}\to\mathbb{D}$ such that the operators $C_\phi D$ or $DC_\phi$, defined as 
\begin{equation*}
    C_\phi D f(z)= f'(\phi(z)) \mbox{ and } DC_\phi f(z)=f'(\phi (z))\phi'(z)
\end{equation*}
for all $z\in\mathbb{D}$, are bounded on $H^2(\mathbb{D})$. Such operators were first studied in \cite{hibschweiler2005composition} and \cite{ohno2006products}, and were subsequently investigated in \cite{fatehi2020composition}. Since $D$ is unbounded, it makes sense to write $C_\phi D$ as a single operator $D_\phi$ as follows:
\begin{equation}\label{D_phi}
    D_{\phi}f=f'\circ\phi
\end{equation}
for all $f\in H^2(\mathbb{D})$. This operator is referred to as the composition-differentiation operator. The operator $D_{\phi}$ is  guaranteed to be bounded if $||\phi||_{\infty}<1$ (see \cite[Theorem 3.3]{ohno2006products}), although there exist examples for which $D_{\phi}$ is bounded or compact when $||\phi||_{\infty}=1$.


For any analytic map $\psi:\mathbb{D}\to\mathbb{C}$ (not necessarily belonging to $H^\infty(\mathbb{D})$, the weighted composition-differentiation operator $D_{\psi,\phi}$ on $H^2(\mathbb{D})$ is defined as follows:
\begin{equation}\label{D_psi,phi}
    D_{\psi,\phi}(f)=\psi \cdot (f'\circ\phi).
\end{equation}
It is clear that if  $\psi\in H^{\infty}(\mathbb{D})$ and the operator $D_\phi$  is bounded on $H^2(\mathbb{D})$, then $D_{\psi,\phi}=T_{\psi}D_{\phi}$ is guaranteed to be bounded (for example, take $||\phi||_{\infty}<1$).
The operators $C_\phi D$ and $DC_\phi$ can each be expressed in the form $D_{\psi,\phi}$, where $\psi\equiv1$ in the first case and $\psi\equiv\phi'$ in the second.

The weighted composition-differentiation operator of order $n$ is defined as
\begin{equation}\label{D_psi,phi,n}
    D_{\psi,\phi,n}f=\psi\cdot (f^{(n)}\circ\phi)
\end{equation}
where $n\in\mathbb{N}$. The operator $D_{\psi,\phi,n}$ is bounded on $H^2(\mathbb{D})$ whenever $\psi\in H^{\infty}(\mathbb{D})$ and $D_{\phi,n}$ is bounded on $H^2(\mathbb{D})$ (such as $||\phi||_{\infty}<1$, for details see \cite{gunatillake2023weighted}). This operator generalizes many well known operators. Some of them are listed as follows:
\begin{enumerate}
    \item If $n=1$ and $\psi\equiv 1$, $D_{\psi,\phi,n}$ reduces to composition-differentiation operator $D_\phi$ (as in \eqref{D_phi}). 
    \item For $n=1$, it becomes the weighted composition-differentiation operator $D_{\psi,\phi}$ defined in \eqref{D_psi,phi}.
\end{enumerate}  


On the other other hand, as defined by Rhaly in \cite{rhaly1994posinormal}, a bounded linear operator $T$ on a Hilbert space $H$ is said to be posinormal if there exists a positive operator $P$ such that 
$TT^*=T^*PT$.  The operator $P$ is referred to as an interrupter of $T$. As noted by Rhaly, if $V$ is an isometry (so that $V^*V =I$) and $T$ is posinormal with interrupter $P$, then $VTV^*$ is posinormal with interrupter $VPV^*$. Consequently, posinormality is unitary invariant. The operator $T$ is said to be coposinormal if its adjoint $T^*$ is posinormal. Using results of Douglas \cite{douglas1966majorization}, Rhaly obtained a number of equivalent characterizations of posinormal operators.
\begin{theorem}\cite{rhaly1994posinormal}\label{Rahly}
    For  $T\in B(H)$, the space of all bounded linear operators on a Hilbert space $H$, the following
statements are equivalent:
\begin{enumerate}
 \item $T$ is posinormal.
 \item $Range(T)\subseteq Range(T^*)$.
 \item $TT^*\leq\lambda^2 T^*T$ for some $\lambda\geq 0$.
 \item there exists an operator $A\in B(H)$ such that $T=T^*A$.
\end{enumerate}
\end{theorem}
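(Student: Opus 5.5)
The approach is to prove the cycle $(1)\Rightarrow(3)\Rightarrow(2)\Rightarrow(4)\Rightarrow(1)$. Douglas' majorization/factorization theorem does most of the work. In the form needed here it says: for $A,B\in B(H)$, the conditions $\mathrm{Range}(A)\subseteq\mathrm{Range}(B)$, $AA^*\leq\mu^2BB^*$ for some $\mu\geq0$, and $A=BC$ for some $C\in B(H)$ are equivalent.

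\emph{(1)$\Rightarrow$(3).} Suppose $TT^*=T^*PT$ with $P\geq 0$. Since $P\leq\|P\|I$, for every $x\in H$ we get
$$\langle TT^*x,x\rangle=\langle PTx,Tx\rangle\leq\|P\|\,\|Tx\|^2=\|P\|\langle T^*Tx,x\rangle.$$
Hence (3) holds with $\lambda=\|P\|^{1/2}$.

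\emph{(3)$\Rightarrow$(2) and (2)$\Rightarrow$(4).} Apply Douglas' theorem with $A=T$ and $B=T^*$. The hypothesis $TT^*\leq\lambda^2T^*T$ is exactly $AA^*\leq\lambda^2BB^*$. So Douglas gives $\mathrm{Range}(T)\subseteq\mathrm{Range}(T^*)$, which is (2). Applying Douglas again in the other direction, (2) yields some $A\in B(H)$ with $T=T^*A$, which is (4).

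\emph{(4)$\Rightarrow$(1).} If $T=T^*A$, then $T^*=A^*T$. Therefore
$$TT^*=(T^*A)(A^*T)=T^*(AA^*)T,$$
and $P=AA^*$ is a positive interrupter.

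The only nontrivial ingredient is Douglas' theorem itself, specifically the construction of $C$ from the norm inequality. If it had to be proved from scratch, I would define $C_0(B^*x)=A^*x$ on $\mathrm{Range}(B^*)$. This map is well defined and bounded because $\|A^*x\|\leq\mu\|B^*x\|$. Extend it by continuity to the closure of $\mathrm{Range}(B^*)$, set it to zero on the orthogonal complement, and take $C=C_0^*$. Then $A=BC$. Since the paper cites Douglas, I would simply invoke it.
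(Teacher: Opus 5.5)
Your argument is correct and follows the route the paper indicates, namely Rhaly's proof via Douglas' majorization theorem; each implication in your cycle checks out. One small caveat on your closing remark: the norm-inequality construction you sketch proves only inequality $\Rightarrow$ factorization, whereas your step (2)$\Rightarrow$(4) uses range inclusion $\Rightarrow$ factorization, which from scratch needs the closed graph theorem (take $Cx$ to be the unique $y\in(\ker T^*)^\perp$ with $T^*y=Tx$).
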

\begin{Corollary}\cite{rhaly1994posinormal}\label{kernel}
    If $T$ is posinormal, then $\mbox{Kernel}(T)\subseteq \mbox{Kernel}(T^*)$; in particular $\mbox{Kernel}(T)$ is a reducing subspace for the posinormal operator $T$.
\end{Corollary}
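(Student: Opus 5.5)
The plan is to use characterization (4) of Theorem~\ref{Rahly}. Since $T$ is posinormal, there is an operator $A\in B(H)$ with $T=T^*A$. Taking adjoints gives $T^*=A^*T$. This identity is the only ingredient needed for the kernel inclusion.

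Now let $x\in\mbox{Kernel}(T)$, so that $Tx=0$. Then $T^*x=A^*Tx=A^*0=0$, and hence $x\in\mbox{Kernel}(T^*)$. This proves $\mbox{Kernel}(T)\subseteq\mbox{Kernel}(T^*)$. As an alternative, one could use (3): $\|T^*x\|^2=\langle TT^*x,x\rangle\le\lambda^2\langle T^*Tx,x\rangle=\lambda^2\|Tx\|^2$, which gives the same inclusion directly.

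For the reducing statement, set $M=\mbox{Kernel}(T)$, a closed subspace. It suffices to show that $M$ is invariant under both $T$ and $T^*$.
\begin{itemize}
\item $T$-invariance is trivial: for $x\in M$ we have $Tx=0\in M$.
\item $T^*$-invariance: for $x\in M$, the first part gives $T^*x=0$, so $T^*x\in M$.
\end{itemize}
Hence $M$ reduces $T$. In fact $T$ and $T^*$ both vanish on $M$, so $T=0\oplus T|_{M^\perp}$ relative to the decomposition $H=M\oplus M^\perp$.

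I expect no real obstacle here. The only point needing care is taking the adjoint correctly in $T=T^*A$ to obtain $T^*=A^*T$, together with noting that kernels of bounded operators are closed, so that $M$ is a genuine closed subspace.
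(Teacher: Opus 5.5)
Your proof is correct: taking adjoints in $T=T^*A$ gives $T^*=A^*T$, which vanishes on $\mbox{Kernel}(T)$. A closed subspace invariant under both $T$ and $T^*$ is reducing, which settles the second claim. The paper gives no proof of its own and simply quotes the result from Rhaly, where it is an immediate consequence of the equivalences in Theorem~\ref{Rahly}. It also follows directly from the definition: if $Tx=0$ then $\|T^*x\|^2=\langle T^*PTx,x\rangle=0$. So your argument is the standard route.
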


Composition operators and composition-differentiation operators have attracted considerable attention in the literature. Fatehi and Hammond \cite{fatehi2020composition, fatehi2021normality} studied fundamental operator-theoretic properties, including the adjoint, norm, self-adjointness, and aspects of normality of the  operators defined in \eqref{D_phi} and \eqref{D_psi,phi}. Their work was later extended by Lo and Loh \cite{Lo} to weighted composition-differentiation operator of order $n$. More recently, Bourdon and Thompson \cite{bourdon2023posinormal} have obtained some characterizations for both posinormal and coposinormal composition operators on $H^2(\mathbb{D})$. Motivated by these works, we focus in this paper on the posinormal and coposinormal behavior of weighted composition–differentiation operators of order $n$ on $H^2(\mathbb{D})$.

This paper is organized as follows.\\
In Section 2,  we show that the composition-differentiation operator $D_{\phi,n}$ (i.e., $\psi\equiv1$ in \eqref{D_psi,phi,n}) can not be posinormal for any analytic self-map $\phi$ on $\mathbb{D}$. We also derive some necessary conditions for the weighted composition-differentiation operator $D_{\psi,\phi,n}$ to be posinormal and coposinormal.\\
In Section 3, we obtain an adjoint formula for $D_{\psi,\phi,n}$ for a linear fractional $\phi$ and some particular choice of $\psi$. By using this formula, we remark some results on posinormality of this operator.

\section{Posinormality and coposinormality of $D_{\psi,\phi,n}$}

In this section, we discuss posinormality and coposinormality of $D_{\psi,\phi,n}$ on $H^2(\mathbb{D})$. Throughout this section, we assume that $\psi$ is not identically zero. We start with the following result:

\begin{Lemma}\label{prop1}
Let $\psi:\mathbb{D}\to \mathbb{C}$, $\phi:\mathbb{D}\to \mathbb{D}$ be analytic functions and $n\in\mathbb{N}$ such that $D_{\psi,\phi
,n}$ is bounded on $H^2(\mathbb{D})$. If $f\in\textnormal{Range}(D_{\psi,\phi
,n}^*)$, then $f^{(m)}(0)=0$ for every $0\leq m<n$, where $f^{(0)}\equiv f$.
\end{Lemma}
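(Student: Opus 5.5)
The plan is to use duality: the range of the adjoint is orthogonal to the kernel of the operator. If $f = D_{\psi,\phi,n}^* g$ for some $g \in H^2(\mathbb{D})$, then for every $h \in H^2(\mathbb{D})$
\[
\langle f, h\rangle = \langle g, D_{\psi,\phi,n} h\rangle .
\]
So it suffices to exhibit functions $h$ that are killed by $D_{\psi,\phi,n}$ and whose inner product with $f$ recovers $f^{(m)}(0)$.

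The natural choice is the monomials $h_m(z) = z^m$ for $0 \le m < n$. Each $h_m$ is a polynomial of degree less than $n$, so $h_m^{(n)} \equiv 0$. Hence
\[
D_{\psi,\phi,n} h_m = \psi \cdot (h_m^{(n)}\circ\phi) = 0 ,
\]
that is, $h_m \in \mathrm{Kernel}(D_{\psi,\phi,n})$. It follows that
\[
\langle f, z^m\rangle = \langle g, D_{\psi,\phi,n} z^m\rangle = 0 .
\]

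On the other hand, writing $f(z) = \sum_k a_k z^k$, the definition of the inner product on $H^2(\mathbb{D})$ gives $\langle f, z^m\rangle = a_m = f^{(m)}(0)/m!$. Therefore $f^{(m)}(0) = 0$ for each $0 \le m < n$, as claimed.

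One could equally phrase this with the kernels $K_0^{[m]}(z) = m!\,z^m$ from \eqref{K_w ^n}, since $\langle f, K_0^{[m]}\rangle = f^{(m)}(0)$; for $m = 0$ this is $K_0 \equiv 1$. There is no real obstacle here. The only point needing care is that boundedness of $D_{\psi,\phi,n}$ is what makes the adjoint exist and the duality identity valid.
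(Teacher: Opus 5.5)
Your proposal is correct and is essentially the paper's argument: the paper pairs $f = D_{\psi,\phi,n}^* g$ against $K_0 \equiv 1$ and $K_0^{[m]}(z) = m!\,z^m$. These are scalar multiples of your monomials $z^m$, and $D_{\psi,\phi,n}$ annihilates them because their $n$-th derivatives vanish, so your orthogonality-to-the-kernel phrasing with the Taylor-coefficient reading is the same computation.
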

\begin{proof}
    Let $f\in\mbox{Range}(D_{\psi,\phi
,n}^*)$. Then there exists an element $g\in H^2(\mathbb{D})$ such that $D_{\psi,\phi,n}^*\,g=f$. Let $m$ be a non-negative integer such that $0\leq m<n$. 
Now we consider the following two cases.\\
Case 1 $(m=0)$: From \eqref{K_w}, $K_0(z)=1$ for all $z\in\mathbb{D}$. Now
\begin{eqnarray*}
        f(0)= f^{(0)} (0) &=&\langle f,K_0\rangle\\
        &=&\langle D_{\psi,\phi,n}^*g, \ K_0\rangle\\
        &=&\langle g,\ D_{\psi,\phi,n}K_0\rangle\\
        &=&\left\langle g,\ \psi\cdot \left(K_0\right)^{(n)}\circ\phi\right\rangle\\
        &=&\langle g, 0\rangle\\
        &=& 0
    \end{eqnarray*}
Case 2 $(m>0)$: From \eqref{K_w ^n}, it follows that $(K_0^{[m]})(z)=m!z^m$ for $m\geq 1$. Therefore $\left(K_0^{[m]}\right)^{(n)}(z)=0$. 
    Now \begin{eqnarray*}
        f^{(m)}(0)&=&\langle f,K_0^{[m]}\rangle\\
        &=&\langle D_{\psi,\phi,n}^*g, \ K_0^{[m]}\rangle\\
        &=&\langle g,\ D_{\psi,\phi,n}K_0^{[m]}\rangle\\
        &=&\left\langle g,\ \psi\cdot \left(K_0^{[m]}\right)^{(n)}\circ\phi\right\rangle\\
        &=&\langle g, 0\rangle\\
        &=& 0
    \end{eqnarray*}
whenever $0\leq m <n$.
\end{proof}


Fatehi and Hammond \cite{fatehi2021normality} have given some necessary conditions that a weighted composition-differentiation operator $D_{\psi,\phi}$ can be normal on $H^2(\mathbb{D})$ as follows.
\begin{theorem}\cite{fatehi2021normality}\label{normality condition}
    Suppose $D_{\psi,\phi}$ is normal on $H^2(\mathbb{D})$. Then the function $\psi$ has the following properties :
\begin{enumerate}
    \item $\psi(0)=0$.
    \item $\psi(w)\neq0$ for any $w\in\mathbb{D}\setminus\{0\}$.
    \item $\psi'(0)\neq0$.
\end{enumerate}
Moreover, the map $\phi$ must be univalent.
\end{theorem}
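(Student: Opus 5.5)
The plan is to use three facts that hold for any normal $T=D_{\psi,\phi}$: $\mbox{Kernel}(T)=\mbox{Kernel}(T^*)$ (normality gives $\|Tf\|=\|T^*f\|$), $\overline{\mbox{Range}(T)}=\mbox{Kernel}(T^*)^\perp=\mbox{Kernel}(T)^\perp$, and an explicit action of $T^*$ on reproducing kernels. For that action, for $f\in H^2(\mathbb{D})$ and $w\in\mathbb{D}$,
\[
\langle f, D_{\psi,\phi}^*K_w\rangle=\psi(w)f'(\phi(w))=\langle f,\overline{\psi(w)}K^{[1]}_{\phi(w)}\rangle ,
\]
so $D_{\psi,\phi}^*K_w=\overline{\psi(w)}\,K^{[1]}_{\phi(w)}$. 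I will assume, as is implicit in the univalence conclusion, that $\phi$ is nonconstant. Then $f'\circ\phi\equiv0$ forces $f'\equiv0$, and since $\psi\not\equiv0$ we get $\mbox{Kernel}(D_{\psi,\phi})=\{\text{constants}\}$. Hence $\mbox{Kernel}(D^*_{\psi,\phi})$ is also the constants, and $\overline{\mbox{Range}(D_{\psi,\phi})}=\{1\}^\perp=zH^2(\mathbb{D})$.

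\emph{Step (1).} Since $1\in\mbox{Kernel}(D^*_{\psi,\phi})$, for every $h$ we have $0=\langle D_{\psi,\phi}h,1\rangle=\psi(0)h'(\phi(0))$. Taking $h(z)=z$ gives $\psi(0)=0$.

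\emph{Step (2).} Suppose $\psi(w)=0$ for some $w\neq0$. Then $D^*_{\psi,\phi}K_w=0$, so $K_w\in\mbox{Kernel}(D_{\psi,\phi})$, which consists only of constants. But $K_w$ is nonconstant when $w\ne0$, a contradiction.

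\emph{Step (3).} If $\psi'(0)=0$, then together with $\psi(0)=0$ every $D_{\psi,\phi}f=\psi\cdot(f'\circ\phi)$ vanishes to order at least $2$ at the origin. Thus $\mbox{Range}(D_{\psi,\phi})\subseteq z^2H^2(\mathbb{D})$, a closed subspace strictly smaller than $zH^2(\mathbb{D})$. This contradicts $\overline{\mbox{Range}(D_{\psi,\phi})}=zH^2(\mathbb{D})$.

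\emph{Univalence, the expected main obstacle.} Suppose $\phi(a)=\phi(b)=c$ with $a\ne b$. Both $D^*_{\psi,\phi}K_a$ and $D^*_{\psi,\phi}K_b$ are multiples of $K^{[1]}_c$, so
\[
\overline{\psi(b)}K_a-\overline{\psi(a)}K_b\in\mbox{Kernel}(D^*_{\psi,\phi})=\{\text{constants}\}.
\]
If $a,b\neq0$, then $\psi(a),\psi(b)\neq0$ by Step (2). Since $1,K_a,K_b$ are linearly independent for distinct nonzero $a,b$, this combination is a nonconstant element of the kernel, which is a contradiction. The delicate case is when one of the points, say $a$, is $0$: then $\psi(a)=0$ and the combination degenerates to a multiple of $K_0=1$, so there is no contradiction. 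To handle it, I will use the open mapping theorem. Choose small disjoint neighborhoods $U\ni0$ and $V\ni b$. Then $\phi(U)\cap\phi(V)$ is an open set containing $c$, so I can pick $a'\in U\setminus\{0\}$ and $b'\in V$ with $\phi(a')=\phi(b')$. Applying the previous argument to the pair $a',b'$ gives the contradiction, so $\phi$ is univalent.
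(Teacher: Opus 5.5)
The paper gives no proof of this theorem; it is quoted from Fatehi--Hammond. Your argument is correct, and it is essentially the one the paper uses for its posinormal and coposinormal results, since a normal operator is both.
\begin{itemize}
\item Your Step (1) is the kernel-form counterpart of the case $n=1$ of Theorem \ref{pos D_psi phi_n}.
\item Your Step (3) is the dual form of Theorem \ref{pro4}(2). If $\psi(0)=\psi'(0)=0$, then $z\perp\mbox{Range}(D_{\psi,\phi})$, so $z\in\mbox{Kernel}(D_{\psi,\phi}^*)$, while $D_{\psi,\phi}z=\psi\neq0$.
\item Your Step (2) and univalence argument coincide with Theorem \ref{pro4}(1) and (3). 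This includes the open-mapping device for the case where one of the two preimages is $0$.
\end{itemize}
In fact you only use $\mbox{Kernel}(D_{\psi,\phi})=\mbox{Kernel}(D_{\psi,\phi}^*)$, not full normality.

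Your nonconstancy assumption is genuinely necessary for the univalence claim. For $\phi\equiv c$ and $\psi=sK^{[1]}_c$ with $s\neq0$, the operator $D_{\psi,\phi}f=s\langle f,K^{[1]}_c\rangle K^{[1]}_c$ is a scalar multiple of a rank-one orthogonal projection, hence normal, yet $\phi$ is not univalent.

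However, properties (1)--(3) remain true for constant $\phi$. Your Steps (2) and (3) rely on $\mbox{Kernel}(D_{\psi,\phi})$ being the constants, which fails in that case. To cover every $\phi$, argue directly as in Theorem \ref{pro4}:
\begin{itemize}
\item For (2), use $D_{\psi,\phi}K_w=\overline{w}\,\psi/(1-\overline{w}\phi)^2\neq0$ for $w\neq0$.
\item For (3), once $\psi(0)=0$, use $D_{\psi,\phi}^*z=\overline{\psi'(0)}\,K^{[1]}_{\phi(0)}$ together with $D_{\psi,\phi}z=\psi$.
\end{itemize}
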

As a consequence of the above result, they noted that an unweighted composition-differentiation operator $D_{\phi}$ cannot be normal on $H^2(\mathbb{D})$. An extension of this result is given by Lo and Loh \cite{Lo} as given below: 
\begin{theorem}\cite{Lo}\label{normality for n}
    Let $\psi:\mathbb{D}\to\mathbb{C}$ and $\phi:\mathbb{D}\to\mathbb{D}$ be analytic functions such that $D_{\psi,\phi,n}:H^2(\mathbb{D})\to H^2(\mathbb{D})$ is normal for $n\in\mathbb{N}$. Then
    \begin{enumerate}
        \item $\psi(0)=0$ and $\psi(z)\neq0$ for every $z\in\mathbb{D}\setminus\{0\}$; and
        \item $\psi^{(k)}(0)=0$ for $k=1,2,\ldots,n-1$ and $\psi^{(n)}(0)\neq0$.
    \end{enumerate}
\end{theorem}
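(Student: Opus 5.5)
Since a normal operator $T$ satisfies $TT^*=T^*T$, it is in particular posinormal (take the interrupter $P=I$ in Rhaly's definition). It is also coposinormal, and $\|Tf\|=\|T^*f\|$ for all $f$, so $\mbox{Kernel}(T)=\mbox{Kernel}(T^*)$. The plan is to combine two ingredients: Theorem~\ref{Rahly} together with Lemma~\ref{prop1} to get the vanishing of $\psi^{(k)}(0)$, and the equality of kernels (a strengthening of Corollary~\ref{kernel}) applied to suitable reproducing kernels to get the non-vanishing statements. Write $T=D_{\psi,\phi,n}$ throughout.

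\emph{Step 1 (vanishing at $0$).} By Theorem~\ref{Rahly}, normality gives $\mbox{Range}(T)\subseteq\mbox{Range}(T^*)$. Take $f(z)=z^n/n!$, so that $f^{(n)}\equiv 1$ and $Tf=\psi$. Then $\psi\in\mbox{Range}(T^*)$. Lemma~\ref{prop1} now yields $\psi^{(m)}(0)=0$ for $0\le m<n$, which gives $\psi(0)=0$ and $\psi^{(k)}(0)=0$ for $1\le k\le n-1$.

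\emph{Step 2 (no other zeros).} First compute $T^*K_w$. For $g\in H^2(\mathbb{D})$,
\[\langle g,T^*K_w\rangle=\psi(w)g^{(n)}(\phi(w))=\langle g,\overline{\psi(w)}K^{[n]}_{\phi(w)}\rangle,\]
so $T^*K_w=\overline{\psi(w)}K^{[n]}_{\phi(w)}$. Suppose $\psi(w)=0$ for some $w\neq0$. Then $K_w\in\mbox{Kernel}(T^*)=\mbox{Kernel}(T)$, hence $\psi\cdot(K_w^{(n)}\circ\phi)\equiv0$. But
\[K_w^{(n)}(z)=\frac{n!\,\overline{w}^n}{(1-\overline{w}z)^{n+1}}\]
never vanishes on $\mathbb{D}$ when $w\neq0$. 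This forces $\psi\equiv0$, contradicting the standing assumption.

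\emph{Step 3 ($\psi^{(n)}(0)\neq0$).} This is the step needing a small computation. For $g\in H^2(\mathbb{D})$, apply Leibniz's rule to $(\psi\cdot(g^{(n)}\circ\phi))^{(n)}(0)$. By Step~1, $\psi^{(j)}(0)=0$ for $j<n$, so every term except $j=n$ vanishes, and
\[\langle Tg,K_0^{[n]}\rangle=\psi^{(n)}(0)\,g^{(n)}(\phi(0)).\]
Hence $T^*K_0^{[n]}=\overline{\psi^{(n)}(0)}\,K^{[n]}_{\phi(0)}$. If $\psi^{(n)}(0)=0$, then $K_0^{[n]}=n!z^n\in\mbox{Kernel}(T^*)=\mbox{Kernel}(T)$. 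But $T(n!z^n)=(n!)^2\psi$, which is nonzero, a contradiction. The only delicate point is the Leibniz bookkeeping, which relies on Step~1 being done first.
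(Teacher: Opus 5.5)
The paper gives no proof of this statement (it is quoted from Lo and Loh). Your argument is correct, and it is what the paper's own Section~2 results give once you use that a normal operator is both posinormal and coposinormal: Step~1 is the proof of Theorem~\ref{pos D_psi phi_n}, and Steps~2--3 are essentially parts (1)--(2) of Theorem~\ref{pro4}, with Step~1 collapsing the Leibniz sum. Writing it out directly also never uses nonconstancy of $\phi$, which is a hypothesis in the statement of Theorem~\ref{pro4}, so your proof covers the rank-one case of constant $\phi$ as well.
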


In light of the above Theorem, it is clear that $D_{\phi,n}$ is not normal for any $\phi$ and for any $n\in\mathbb{N}$. We now present a broader version of this result.

\begin{theorem}\label{D_phi pos}
    The unweighted composition-differentiation operator $D_{\phi,n}$ on $H^2(\mathbb{D})$ of order $n$ can not be posinormal for any self-map $\phi$ on $\mathbb{D}$ and for any $n\in\mathbb{N}$.
\end{theorem}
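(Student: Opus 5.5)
We argue by contradiction, using the range characterization in Theorem \ref{Rahly}. Suppose $D_{\phi,n}$ is bounded and posinormal for some self-map $\phi$ of $\mathbb{D}$ and some $n\in\mathbb{N}$. By Theorem \ref{Rahly}(2), $\mbox{Range}(D_{\phi,n})\subseteq \mbox{Range}(D_{\phi,n}^*)$. Applying Lemma \ref{prop1} with $\psi\equiv 1$, every $f\in \mbox{Range}(D_{\phi,n}^*)$ satisfies $f(0)=0$; taking $m=0$ is enough, and this is available since $n\ge 1$. It therefore suffices to find one function in $\mbox{Range}(D_{\phi,n})$ that does not vanish at $0$.

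Take $f(z)=z^n/n!$, which lies in $H^2(\mathbb{D})$. Then $f^{(n)}\equiv 1$, so $D_{\phi,n}f=f^{(n)}\circ\phi\equiv 1$. The constant function $1$ is thus in $\mbox{Range}(D_{\phi,n})$, but its value at $0$ is $1\neq 0$. Hence $1\notin\mbox{Range}(D_{\phi,n}^*)$, which contradicts the inclusion of ranges. So $D_{\phi,n}$ cannot be posinormal.

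Equivalently, one can phrase this directly: $K_0\equiv 1$ is annihilated by $D_{\phi,n}$, so $\langle D_{\phi,n}^*g, K_0\rangle=0$ for all $g$, while $\langle D_{\phi,n}(z^n/n!),K_0\rangle=1$.

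There is no real obstacle here. The only point to note is that posinormality is defined only for bounded operators, so the statement implicitly concerns those $\phi$ for which $D_{\phi,n}$ is bounded; for any other $\phi$ the claim holds vacuously. The argument never uses any property of $\phi$ beyond boundedness of $D_{\phi,n}$, which is exactly what allows the conclusion to hold for every self-map $\phi$ and every $n$.
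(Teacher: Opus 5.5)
Your proof is correct and is the paper's argument: the paper takes $f(z)=az^n/n!$ with $a\neq 0$, so that $D_{\phi,n}f\equiv a$, and then applies Lemma \ref{prop1} (with $\psi\equiv 1$, case $m=0$) and Theorem \ref{Rahly} to get $\mbox{Range}(D_{\phi,n})\not\subseteq\mbox{Range}(D_{\phi,n}^*)$. Your choice $a=1$ is the same proof, and your $K_0$ rephrasing just unpacks the $m=0$ case of that lemma.
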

\begin{proof}
    Suppose that $D_{\phi,n}$ is bounded for some $\phi$ and for some $n\in\mathbb{N}$. Let $g(z)=a$ and $\displaystyle f(z)=\frac{az^n}{n!}$ where $a$ is a nonzero complex number. Clearly $g,f\in H^2(\mathbb{D})$. Also, for all $z\in\mathbb{D}$, 
    \begin{equation*}
        (D_{\phi,n} f)(z)=f^{(n)}(\phi(z))=a=g(z), \mbox{ i.e., } D_{\phi,n} f=g 
    \end{equation*}
    which implies $g\in\mbox{Range}(D_{\phi,n})$. 
    Since $g(0)\neq 0$, from Lemma \ref{prop1} (i.e., for $\psi\equiv1$), we get $g\notin\mbox{range}(D_{\phi,n}^*)$. So $\mbox{Range}(D_{\phi,n})\nsubseteq \mbox{Range}(D_{\phi,n}^*)$. From Theorem \ref{Rahly}, it follows that $D_{\phi,n}$ can not be posinormal for any $\phi$ and for any $n\in\mathbb{N}$. 
\end{proof}


Bourdon and Thompson \cite{bourdon2023posinormal} obtained the following necessary condition on posinormality of the composition operator $C_{\phi}$ as,
\begin{theorem}\cite[Corollary 1.4]{bourdon2023posinormal}\label{C_phi posinormal}
    If $C_\phi$ is posinormal and $\phi$ is linear fractional, then $\phi(\beta)=0$ for some $\beta\in\mathbb{D}$.
\end{theorem}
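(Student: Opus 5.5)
We argue by contradiction, using the range inclusion of Theorem \ref{Rahly}(2) together with Cowen's adjoint formula for composition operators with linear fractional symbol. Write $\phi(z)=\frac{az+b}{cz+d}$ with $ad-bc\neq 0$. If $\phi$ is constant, then $C_\phi f=f(\phi(0))$, so $C_\phi^*g=\langle g,1\rangle K_{\phi(0)}$ and $\mbox{Range}(C_\phi^*)=\mbox{span}\{K_{\phi(0)}\}$. Since $1=C_\phi 1\in\mbox{Range}(C_\phi)$, posinormality forces $1\in\mbox{span}\{K_{\phi(0)}\}$, hence $\phi\equiv 0$ and any $\beta$ works. So assume $\phi$ is nonconstant. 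If $b=0$ we may take $\beta=0$, so assume $b\neq 0$ and, for a contradiction, that $\phi$ has no zero in $\mathbb{D}$. The only possible zero is $-b/a$ (if $a=0$ there is none), so the assumption is $a=0$ or $|b|\ge |a|$.

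The key input is Cowen's formula
\[
C_\phi^*=T_{G}\,C_\sigma\,T_h^*,\qquad
\sigma(z)=\frac{\overline{a}z-\overline{c}}{-\overline{b}z+\overline{d}},\quad
G(z)=\frac{1}{-\overline{b}z+\overline{d}},\quad h(z)=cz+d ,
\]
where $\sigma$ is again a self-map of $\mathbb{D}$ and $T_h^*g=\overline{d}\,g+\overline{c}\,\frac{g-g(0)}{z}\in H^2(\mathbb{D})$. Since $1=C_\phi 1\in \mbox{Range}(C_\phi)\subseteq\mbox{Range}(C_\phi^*)$, there is $g\in H^2(\mathbb{D})$ with $C_\phi^*g=1$. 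Putting $u=T_h^*g\in H^2(\mathbb{D})$, this reads
\[
u\circ\sigma(z)=-\overline{b}z+\overline{d}\qquad (z\in\mathbb{D}).
\]

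Next we invert $\sigma$, which is a nonconstant Möbius map. On the open set $\sigma(\mathbb{D})$ this gives
\[
u(w)=-\overline{b}\,\sigma^{-1}(w)+\overline{d}=\frac{\overline{a}\,\overline{d}-\overline{b}\,\overline{c}}{\overline{b}w+\overline{a}} .
\]
By the identity theorem the same formula holds on all of $\mathbb{D}$. Its pole is at $-\overline{a}/\overline{b}$, and the numerator is nonzero because $ad-bc\neq 0$. We now split into cases. If $a=0$ or $|a|<|b|$, the pole lies in $\mathbb{D}$, which contradicts the analyticity of $u$ there. If $|a|=|b|$, then $u$ is a nonzero multiple of $\frac{1}{1-\zeta w}$ with $|\zeta|=1$, and this function is not in $H^2(\mathbb{D})$ because its Taylor coefficients have constant modulus. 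Either way we contradict $u\in H^2(\mathbb{D})$, so $\phi(\beta)=0$ for some $\beta\in\mathbb{D}$.

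The main obstacle is the careful use of the adjoint formula: we must confirm that $\sigma$ is a self-map of $\mathbb{D}$ (i.e.\ $|\overline{b}z-\overline{d}|$ does not vanish on $\overline{\mathbb{D}}$ when $\phi$ maps into $\mathbb{D}$), and we must handle the boundary case $|a|=|b|$, where the argument relies on the $H^2$ norm rather than on analyticity.
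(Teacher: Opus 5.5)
Your argument is correct, and it is essentially the $C_\phi$ version (formally the case ``$n=0$'') of the paper's own proof of Theorem \ref{phi(beta)=0} and Corollary \ref{Corolarry on pos}: apply the range inclusion to $1=C_\phi 1$, use Cowen's adjoint formula to get $u\circ\sigma=-\overline{b}z+\overline{d}$, invert $\sigma$, and then exclude the pole $-\overline{a}/\overline{b}$ from $\mathbb{D}$ by analyticity and from $\partial\mathbb{D}$ by the $H^2$ norm. The paper itself gives no proof of this statement, since it is quoted from \cite{bourdon2023posinormal}; your version needs no hypothesis like $\|\phi\|_\infty<1$ because $C_\sigma$ is always bounded, and you treat $a=0$ and constant $\phi$ explicitly, which the paper's analogous argument leaves implicit.
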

However, Theorem \ref{D_phi pos} demonstrates that a comparable condition is unattainable for $D_{\phi,n}$. Interestingly, although the unweighted composition-differentiation operator $D_{\phi,n}$ cannot be posinormal, the weighted version $D_{\psi,\phi,n}$ can achieve posinormality for some $\psi$. Since every normal operator is necessarily posinormal, Theorem \ref{normality condition} and Theorem \ref{normality for n} further imply that a weighted composition-differentiation operator can be posinormal for some appropriate choice of $\psi$. Also, in \cite[Proposition 4]{fatehi2021normality}, Fatehi and Hammond provided a characterization of all normal operators $D_{\psi, \phi}$ on $H^2(\mathbb{D})$ satisfying $\phi(0)=0$, which is further generalized by Lo and Loh in \cite[Theorem 4.5]{Lo} for the operator $D_{\psi,\phi,n}$. This naturally leads us to investigate the existence of posinormal operators that are strictly non-normal. Consider the following example.
\begin{example}\label{example1}
     Let $\psi(z)=z^2$ and $\phi(z)=az$, where $0<|a|<1$ and $z\in\mathbb{D}$. As $\|\phi\|_{\infty}<1$, so $D_{\psi,\phi}$ is bounded. As a direct consequence of Theorem \ref{normality condition}, we conclude that $D_{\psi,\phi}$ is not normal as it violates the third condition.\\
     Let $\{e_k\}$ be the orthonormal basis of $H^2(\mathbb{D})$, where $e_k(z)=z^k, \ z\in\mathbb{D},\ k\geq0$. Let 
     \begin{equation*}
         f(z)=\displaystyle \sum_{k=0}^\infty a_kz^k=\sum_{k=0}^\infty a_ke_k (z)
     \end{equation*}
     and $S_k$ denote the partial sum of this series. Therefore $\lim\limits_{k\to\infty}(D_{\psi,\phi}S_k)(z)=(D_{\psi,\phi}f)(z)$. This gives
     \begin{eqnarray*}
         (D_{\psi,\phi}f)(z) &=& \lim_{k\to\infty}D_{\psi,\phi}\left(\sum_{r=0}^k a_rz^r\right)\\
    &=& \lim_{k\to\infty}\sum_{r=0}^ka_rD_{\psi,\phi}z^r\\
         &=& \lim_{k\to\infty}(a_0\cdot0+a_1z^2+2aa_2z^3+3a^2a_3z^4+\ldots+ka^{k-1}a_kz^{k+1})\\
         &=&\sum_{k=0}^\infty w_ka_kz^{k+1}
     \end{eqnarray*}
     where $\{w_k\}$ is the sequence with $w_0=0$ and $w_k=ka^{k-1}, \ k\geq1$. Note that $\displaystyle \sum_{k=0}^\infty|w_ka_k|^2 <\infty$ and  $\mbox{Range}(D_{\psi,\phi})=\overline{\mbox{Span}\{z^2,z^3,\ldots\}}$. \\
     Thus, $D_{\psi,\phi}$ acts as a bounded unilateral weighted shift operator on $H^2{(\mathbb{D})}$ with the weight sequence $\{w_k\}$. So, its adjoint $D_{\psi,\phi}^*$ on the orthonormal basis $\{e_k\}$ is given by     
     $$D_{\psi,\phi}^*e_k=\begin{cases}
     0, & k=0,1\\
         \overline{w_{k-1}}\ e_{k-1}, & k\geq 2.
     \end{cases}$$
    Therefore,
     $\mbox{Range}(D_{\psi,\phi}^*)=\overline{\mbox{Span}\{z,z^2,z^3,\ldots\}}$. So, $\mbox{Range}(D_{\psi,\phi})\subseteq\mbox{Range}(D_{\psi,\phi}^*)$. Hence from Theorem \ref{Rahly}, it follows that $D_{\psi,\phi}$ is posinormal.\\
\end{example}

Consequently, we provide the following necessary condition for posinormality:





\begin{theorem}\label{pos D_psi phi_n}
    Let $\psi:\mathbb{D}\to \mathbb{C}$ and $\phi:\mathbb{D}\to \mathbb{D}$ be analytic functions. Suppose $D_{\psi,\phi,n}$ is posinormal on $H^2(\mathbb{D})$ for some $n\in\mathbb{N}$. Then $\psi^{(m)}(0)=0$ for every $0\leq m<n$.
\end{theorem}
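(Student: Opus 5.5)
The plan mirrors the proof of Theorem \ref{D_phi pos}: exhibit an explicit element of $\mbox{Range}(D_{\psi,\phi,n})$ and apply Lemma \ref{prop1} to it. Take the monomial $f(z)=\frac{z^n}{n!}\in H^2(\mathbb{D})$. Then $f^{(n)}\equiv 1$, so
\[
D_{\psi,\phi,n}f=\psi\cdot\bigl(f^{(n)}\circ\phi\bigr)=\psi .
\]
Hence $\psi\in\mbox{Range}(D_{\psi,\phi,n})$. In particular $\psi\in H^2(\mathbb{D})$, since $D_{\psi,\phi,n}$ is a bounded operator on $H^2(\mathbb{D})$; posinormality presupposes boundedness.

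Next, since $D_{\psi,\phi,n}$ is posinormal, Theorem \ref{Rahly} (the equivalence of (1) and (2)) gives
\[
\mbox{Range}(D_{\psi,\phi,n})\subseteq \mbox{Range}(D_{\psi,\phi,n}^*).
\]
Therefore $\psi\in\mbox{Range}(D_{\psi,\phi,n}^*)$. Lemma \ref{prop1}, applied with $f=\psi$, then yields $\psi^{(m)}(0)=0$ for every $0\leq m<n$, which is the claim.

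There is no real obstacle here; the argument is short. The only point needing care is checking that the hypotheses of Lemma \ref{prop1} are met, namely that $D_{\psi,\phi,n}$ is bounded on $H^2(\mathbb{D})$. This is implicit in the definition of posinormality, which is stated only for bounded operators.
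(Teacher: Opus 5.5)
Your proposal is correct and follows the paper's proof essentially verbatim: apply $D_{\psi,\phi,n}$ to $z^n/n!$ to get $\psi\in\mbox{Range}(D_{\psi,\phi,n})$, use Theorem \ref{Rahly} to place $\psi$ in $\mbox{Range}(D_{\psi,\phi,n}^*)$, and conclude with Lemma \ref{prop1}. Your remark that boundedness, and hence the applicability of Lemma \ref{prop1}, is built into the definition of posinormality is a sensible addition that the paper leaves implicit.
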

\begin{proof}
     Let $\displaystyle f(z)=\frac{z^n}{n!}$ for all $ z\in\mathbb{D}$. Then
\begin{equation*}
    (D_{\psi,\phi,n}f)(z) =\psi(z)\cdot f^{(n)}(\phi(z))=\psi(z), \mbox{ i.e, } D_{\psi,\phi,n}f=\psi
\end{equation*}
which gives $\psi\in\textnormal{Range}(D_{\psi,\phi,n})$. Since $D_{\psi,\phi,n}$ is posinormal, we must have $\psi\in\textnormal{Range}(D_{\psi,\phi,n}^*)$ from Theorem \ref{Rahly}. Therefore by Lemma \ref{prop1}, we have $\psi^{(m)}(0)=0$ for every $0\leq m<n$.  
\end{proof}

A natural question can arise for the converse part of the above theorem, which we answer in the last section of the present article. To proceed further, the following result \cite{hu2023generalized} will be used.
\begin{Lemma}\cite[Lemma 1]{hu2023generalized}\label{lemman}
    Suppose $\psi:\mathbb{D}\to \mathbb{C}$ and $\phi:\mathbb{D}\to \mathbb{D}$ be analytic functions. Let $n$ be a positive integer and $w\in\mathbb{D}$. If $D_{\psi,\phi,n}$ is bounded on $H^2(\mathbb{D})$, then
    \begin{eqnarray*}
D_{\psi,\phi,n}^*K_w &=& \overline{\psi(w)}K_{\phi(w)}^{[n]}.
\end{eqnarray*}
\end{Lemma}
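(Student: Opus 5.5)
We want to show that $D_{\psi,\phi,n}^*K_w=\overline{\psi(w)}K_{\phi(w)}^{[n]}$. The plan is to compute the adjoint on the kernel $K_w$ by testing it against an arbitrary $f\in H^2(\mathbb{D})$. Since $D_{\psi,\phi,n}$ is assumed bounded, its adjoint is a well-defined bounded operator. Hence $D_{\psi,\phi,n}^*K_w$ is the unique element $h$ of $H^2(\mathbb{D})$ satisfying $\langle f,h\rangle=\langle D_{\psi,\phi,n}f,K_w\rangle$ for every $f\in H^2(\mathbb{D})$. So it suffices to show that $\overline{\psi(w)}K_{\phi(w)}^{[n]}$ has this same property.

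The first step uses the reproducing property \eqref{K_w} of $K_w$:
\begin{equation*}
\langle f, D_{\psi,\phi,n}^*K_w\rangle=\langle D_{\psi,\phi,n}f,K_w\rangle=(D_{\psi,\phi,n}f)(w)=\psi(w)\,f^{(n)}(\phi(w)).
\end{equation*}
The second step uses the derivative-reproducing property \eqref{K_w ^n} at the point $\phi(w)\in\mathbb{D}$, namely $f^{(n)}(\phi(w))=\langle f,K_{\phi(w)}^{[n]}\rangle$. Pulling the scalar $\psi(w)$ into the second slot of the inner product conjugates it, which gives
\begin{equation*}
\langle f, D_{\psi,\phi,n}^*K_w\rangle=\psi(w)\langle f,K_{\phi(w)}^{[n]}\rangle=\langle f,\overline{\psi(w)}K_{\phi(w)}^{[n]}\rangle .
\end{equation*}

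Since this identity holds for every $f\in H^2(\mathbb{D})$, the two vectors $D_{\psi,\phi,n}^*K_w$ and $\overline{\psi(w)}K_{\phi(w)}^{[n]}$ coincide, which is the claim.

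There is no real obstacle in this argument; only two points need care. First, $K_{\phi(w)}^{[n]}$ must be a legitimate element of $H^2(\mathbb{D})$, which holds because $|\phi(w)|<1$, so \eqref{K_w ^n} applies. Second, conjugate-linearity in the second slot must be handled correctly so that the factor appears as $\overline{\psi(w)}$ rather than $\psi(w)$.
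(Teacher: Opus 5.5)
Your proof is correct and complete. You test $D_{\psi,\phi,n}^*K_w$ against an arbitrary $f\in H^2(\mathbb{D})$ and then apply the reproducing properties of $K_w$ and $K_{\phi(w)}^{[n]}$, which is the standard argument. The paper quotes this lemma from \cite{hu2023generalized} without proof, but it uses the same duality computation in the proof of Theorem \ref{pro4}(2), so your route matches the intended one.
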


Now, consider the following example where $\psi(0)\neq0$. 
\begin{example}\label{ex1}
    Let $\psi(z)=\lambda \ (\neq0)$ and $\phi(z)=az$, where $0<|a|<1$ and $z\in\mathbb{D}$. Since $||\phi||_{\infty}<1$, $D_{\psi,\phi}$ is bounded. From the first condition of Theorem \ref{normality condition}, it follows that $D_{\psi,\phi}$ is not normal.\\
    Consider the orthonormal basis $\{e_k\}$ of $H^2(\mathbb{D})$, where $e_k(z)=z^k,~z\in\mathbb{D}, k\in \mathbb{N}\cup \{0\}$. Let $f\in H^2(\mathbb{D})$ with $\displaystyle f(z)=\sum_{m=0}^\infty b_mz^m$. 
   From the proof of Theorem 3.4 of \cite{han2022some}, it follows that  $D_{\psi,\phi}^* z^k=(k+1)\overline{\lambda a^k}z^{k+1}~~,~~k\in\mathbb{N}$.\\    
   Now for $k=0$, $D_{\psi,\phi}^*1=(D_{\psi,\phi}^*K_0)(z)=\overline{\psi(0)}K_{\phi(0)}^{[1]}=\overline{\lambda}z$. Therefore 
   $$D_{\psi,\phi}^* z^k=(k+1)\overline{\lambda a^k}z^{k+1}~~,~~k\geq0.$$
Thus, $D_{\psi,\phi}^*$ acts as a bounded unilateral weighted shift operator with the nonzero weight sequence $\{w_k\}$ where $w_k=(k+1)\overline{\lambda a^k} , \ k\in\mathbb{N}\cup\{0\}$. Hence from \cite[Proposition 1.1]{rhaly1994posinormal}, we can say that $D_{\psi,\phi}^*$ is posinormal, i.e., $D_{\psi,\phi}$ is coposinormal.
\end{example}

The above example shows that although an unweighted composition-differentiation operator $D_{\phi}$ fails to be posinormal, but it can still be coposinormal (i.e., $\lambda=1$ in Example \ref{ex1}). To proceed further, we need the following lemma.

\begin{Lemma}\label{ker=poly}
    Let $\psi:\mathbb{D}\to\mathbb{C}$ be an analytic function and $\phi$ be a nonconstant analytic self-map of $\mathbb{D}$. Then $\mbox{Kernel}(D_{\psi,\phi,n})$ consists exactly all polynomials in $\mathbb{C}$ of degree $\leq n-1$.
\end{Lemma}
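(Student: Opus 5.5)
The plan is to prove the two inclusions separately. Throughout we use the standing assumption of this section that $\psi$ is not identically zero, which is essential: if $\psi\equiv 0$ the kernel would be all of $H^2(\mathbb{D})$.

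The inclusion of the polynomials in the kernel is immediate. If $p$ is a polynomial of degree $\leq n-1$, then $p^{(n)}\equiv 0$. Hence $D_{\psi,\phi,n}p=\psi\cdot(p^{(n)}\circ\phi)=0$, and every such $p$ lies in $\mbox{Kernel}(D_{\psi,\phi,n})$. These polynomials clearly belong to $H^2(\mathbb{D})$.

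For the reverse inclusion, take $f\in H^2(\mathbb{D})$ with $\psi(z)\,f^{(n)}(\phi(z))=0$ for all $z\in\mathbb{D}$.

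1. Since $\psi$ is analytic and not identically zero on the connected set $\mathbb{D}$, its zero set $Z(\psi)$ is discrete in $\mathbb{D}$.
2. Therefore $f^{(n)}\circ\phi$ vanishes on $\mathbb{D}\setminus Z(\psi)$, which is a nonempty open set (and even dense).
3. The function $f^{(n)}\circ\phi$ is analytic on $\mathbb{D}$, so by the identity theorem (or simply by continuity) $f^{(n)}\circ\phi\equiv 0$ on $\mathbb{D}$.
4. Because $\phi$ is a nonconstant analytic map, the open mapping theorem shows that $\phi(\mathbb{D})$ is a nonempty open subset of $\mathbb{D}$.
5. Thus $f^{(n)}$ vanishes on a nonempty open set, and again by the identity theorem $f^{(n)}\equiv 0$ on $\mathbb{D}$.
6. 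Integrating $n$ times, or comparing Taylor coefficients, shows that $f$ is a polynomial of degree at most $n-1$.

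There is no serious obstacle. The only step needing care is passing from $\psi\cdot(f^{(n)}\circ\phi)=0$ to $f^{(n)}\circ\phi=0$, which uses the isolatedness of the zeros of the nontrivial weight $\psi$. The second appeal to the identity theorem is where the hypothesis that $\phi$ is nonconstant is used. If $\phi$ were constant, say $\phi\equiv c$, the kernel would instead be $\{f: f^{(n)}(c)=0\}$, which is strictly larger than the polynomials of degree $\leq n-1$.
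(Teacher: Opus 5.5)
Your proof is correct and follows the same route as the paper. You prove the trivial inclusion, use the isolated zeros of $\psi\not\equiv 0$ to get $f^{(n)}\circ\phi\equiv 0$, and then apply the open mapping theorem and the identity theorem to conclude $f^{(n)}\equiv 0$, spelling out in detail the steps the paper compresses into a single sentence.
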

\begin{proof}
    Clearly, every polynomial of degree less than $n$ belongs to $\mbox{Kernel}(D_{\psi,\phi,n})$. Conversely, let $f\in\mbox{Kernel}(D_{\psi,\phi,n})$. Then
$$(D_{\psi,\phi,n}f)(z)=\psi(z)\cdot f^{(n)}(\phi(z))=0~~~~~~\mbox{for all}~z\in\mathbb{D}.$$
    Since $\psi$ is not identically $0$ on $\mathbb{D}$ and $\phi$ is nonconstant analytic function on $\mathbb{D}$ such that $\phi(\mathbb{D})\subset\mathbb{D}$, by open mapping theorem and identity theorem, it follows that $f^{(n)}(z)=0$ for all $z\in\mathbb{D}$. This implies that $f$ is a polynomial of degree $\leq n-1$ on $\mathbb{D}$.  
\end{proof}

Next, consider the following example.
\begin{example}\label{ex2}
    Let $\psi(z)=z^2$ and $\phi(z)=\frac{z^2}{2}$, where $z\in\mathbb{D}$. Then clearly $\psi$ and $\phi$ do not satisfy the conditions given in Theorem \ref{normality condition}. Hence $D_{\psi,\phi}$ is bounded but not normal.\\
    Let $\{e_k\}$ be the orthonormal basis of $H^2(\mathbb{D})$, where $e_k(z)=z^k$,  $z\in\mathbb{D}, k\in \mathbb{N}\cup \{0\}$. Then $(D_{\psi,\phi}e_k)(z)=\psi(z)e_k'(\phi(z))=\frac{k}{2^{k-1}}z^{2k}.$ 
    Hence 
    \begin{eqnarray*}
        \mbox{Range}(D_{\psi,\phi}) &\subseteq& \overline{\mbox{Span}\{z^2, z^4,\ldots\}}\\
        \mbox{i.e., } \ \overline{\mbox{Span}\{z^2, z^4,\ldots\}}^{\perp} &\subseteq& [\mbox{Range}(D_{\psi,\phi})]^{\perp}\\
        \mbox{i.e., } \ \overline{\mbox{Span}\{1,z, z^3,\ldots\}} &\subseteq& \mbox{Kernel}(D_{\psi,\phi}^*).
    \end{eqnarray*}
    Let $g(z)=z^3, ~z\in\mathbb{D}$. Then $g\in\mbox{Kernel}(D_{\psi,\phi}^*)$, but $g\notin \mbox{Kernel}(D_{\psi,\phi})$ as the kernel of $D_{\psi,\phi}$ contains only constant functions by Lemma \ref{ker=poly} for $n=1$. Hence $D_{\psi,\phi}$ is not coposinormal by Corollary \ref{kernel}.
\end{example}

From Example \ref{ex1}, it follows that $D_{\psi,\phi}$ is coposinormal even if $\psi(0)\neq 0$. On the other hand, Example \ref{ex2} shows that $D_{\psi,\phi}$ fails to be coposinormal when $\psi(0)=0=\psi'(0)$ and $\phi$ is not injective. Motivated by these two examples, here we derive the following necessary conditions on the coposinormality of $D_{\psi,\phi,n}$ on $H^2(\mathbb{D})$.

\begin{theorem}\label{pro4}
   Let $\psi:\mathbb{D}\to\mathbb{C}$ be an analytic function and $\phi$ be a nonconstant analytic self-map of $\mathbb{D}$. Suppose that $D_{\psi,\phi,n}$ is coposinormal on $H^2(\mathbb{D})$ for $n\in\mathbb{N}$. Then $\psi$ and $\phi$ satisfy the following conditions :
   \begin{enumerate}
       \item\label{pro41} $\psi(w)\neq 0$ for any $w\in\mathbb{D}\setminus\{0\}$.
       \item\label{pro43} $0$ can not be the zero of $\psi$ of multiplicity $>n$.
       \item\label{pro44} The map $\phi$ must be injective.
   \end{enumerate}
\end{theorem}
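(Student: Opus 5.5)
The plan is to apply Corollary \ref{kernel} to $T=D_{\psi,\phi,n}^*$. Coposinormality of $D_{\psi,\phi,n}$ means $D_{\psi,\phi,n}^*$ is posinormal, so
\[
\mbox{Kernel}(D_{\psi,\phi,n}^*)\subseteq \mbox{Kernel}(D_{\psi,\phi,n}).
\]
By Lemma \ref{ker=poly}, the right-hand side is exactly the space $\mathcal{P}_{n-1}$ of polynomials of degree $\leq n-1$, since $\phi$ is nonconstant. Each of the three conditions will be proved by contradiction: assuming it fails, I produce a function in $\mbox{Kernel}(D_{\psi,\phi,n}^*)$ that is not a polynomial of degree $\leq n-1$. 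The main tool is Lemma \ref{lemman}, $D_{\psi,\phi,n}^*K_w=\overline{\psi(w)}K^{[n]}_{\phi(w)}$, together with its derivative analogues.

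For (1), suppose $\psi(w)=0$ for some $w\neq 0$. Lemma \ref{lemman} then gives $D_{\psi,\phi,n}^*K_w=0$. But $K_w(z)=(1-\overline{w}z)^{-1}$ with $w\neq0$ is not a polynomial, which is a contradiction.

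For (3), suppose $\phi(w_1)=\phi(w_2)$ with $w_1\neq w_2$. Set $h=\overline{\psi(w_2)}K_{w_1}-\overline{\psi(w_1)}K_{w_2}$. By Lemma \ref{lemman}, $D^*_{\psi,\phi,n}h=0$.
\begin{itemize}
\item If at least one of $\psi(w_1),\psi(w_2)$ is nonzero, then $h$ is a nontrivial combination of two distinct kernels, whose poles lie at $1/\overline{w_1}$ and $1/\overline{w_2}$ (or one of the kernels is constant). Such an $h$ is not a polynomial, unless one coefficient vanishes and the remaining kernel is $K_0$. The degenerate cases (e.g.\ $\psi(w_1)=0$) reduce to (1) when the zero is away from the origin.
\item The leftover case is one of the points being $0$ with $\psi(0)=0$. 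There I would instead take a point $w_1'$ near $w_1$ and the corresponding preimage point $w_2'$ near $w_2$ with $\phi(w_1')=\phi(w_2')$ and both nonzero. Such points exist by the open mapping theorem. Then $\psi(w_1'),\psi(w_2')\neq 0$ by (1), and the argument above applies.
\end{itemize}
This perturbation step is the part I expect to need the most care.

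For (2), suppose $0$ is a zero of $\psi$ of multiplicity $>n$, i.e.\ $\psi^{(m)}(0)=0$ for $0\le m\le n$. I want to show $z^n\in\mbox{Kernel}(D^*_{\psi,\phi,n})$; since $z^n\notin\mathcal{P}_{n-1}$, this gives the contradiction. Recall $z^n=K_0^{[n]}/n!$. For any $f$,
\[
\langle D^*_{\psi,\phi,n}K_0^{[n]},f\rangle=\overline{(\psi\cdot f^{(n)}\circ\phi)^{(n)}(0)}.
\]
By Leibniz's rule, $(\psi\cdot f^{(n)}\circ\phi)^{(n)}(0)$ is a combination of the terms $\psi^{(j)}(0)\,\bigl(f^{(n)}\circ\phi\bigr)^{(n-j)}(0)$ for $0\le j\le n$, and every $\psi^{(j)}(0)$ with $j\le n$ vanishes. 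Hence $D^*_{\psi,\phi,n}K_0^{[n]}=0$, which is the required contradiction. The only care needed here is in reading the multiplicity condition as $\psi^{(m)}(0)=0$ for all $m\le n$.
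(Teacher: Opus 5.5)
Your proposal is correct and follows essentially the same route as the paper: it uses the inclusion $\mbox{Kernel}(D_{\psi,\phi,n}^*)\subseteq\mbox{Kernel}(D_{\psi,\phi,n})$ on $K_w$ when $\psi(w)=0$, on $K_0^{[n]}$ via the Leibniz rule at $0$, and on a combination of $K_{w_1},K_{w_2}$, with the same open-mapping perturbation when one of the points is $0$. The differences are cosmetic: you conclude by exhibiting a non-polynomial in the kernel (Lemma \ref{ker=poly}) where the paper computes $D_{\psi,\phi,n}K_w$ and $D_{\psi,\phi,n}K_0^{[n]}=(n!)^2\psi$ directly, and your cleared-denominator choice of $h$ means you need the perturbation only when $\psi(0)=0$.
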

\begin{proof}
    For any $w\in\mathbb{D}$, we have 
    \begin{equation}\label{matrix}
    D_{\psi,\phi,n}(K_w)=\psi\cdot(K_w^{(n)}\circ\phi)=\frac{n!\overline{w}^n\psi}{(1-\overline{w}\phi)^{n+1}}
    \end{equation}
    where $K_w^{(n)}$ denotes the $n$-th derivative of $K_w$ as in \eqref{K_w}.
    \begin{enumerate}
        \item Suppose that $\psi(w)=0$ for some $w\in\mathbb{D}\setminus\{0\}$. Then from Lemma \ref{lemman},
        \begin{equation*}
            D_{\psi,\phi,n}^*(K_w)=0, \ \mbox{ i.e., } \  K_w\in\textnormal{Kernel}(D_{\psi,\phi,n}^*).
        \end{equation*}
        Since $D_{\psi,\phi,n}$ is coposinormal, so from Corollary \ref{kernel}, it is evident that $\textnormal{Kernel}(D_{\psi,\phi,n}^*)\subseteq \textnormal{Kernel}(D_{\psi,\phi,n})$. This gives $K_w\in \textnormal{Kernel}(D_{\psi,\phi,n})$. From \eqref{matrix}, it follows that $\overline{w}^n\psi\equiv0$. Since $\psi$ is not identically zero, we conclude that $w=0$, a contradiction. Therefore $\psi(w)\neq 0$.

    \item For any $f\in H^2(\mathbb{D})$ and $w\in\mathbb{D}$, we have 
    \begin{eqnarray*}
        \langle f, D_{\psi,\phi,n}^*K_w^{[n]}\rangle &=&\langle D_{\psi,\phi,n}f, \ K_w^{[n]}\rangle\\
&=&\langle\psi\cdot(f^{(n)}\circ\phi),\ K_w^{[n]}\rangle\\
        &=&\left(\psi\cdot(f^{(n)} \circ \phi)\right)^{(n)}(w)
    \end{eqnarray*}
    where $K_w^{[n]}$ is given in \eqref{K_w ^n}. Now, by Leibnitz rule for successive differentiation, we obtain
    \begin{equation}\label{n>0}
        \langle f, D_{\psi,\phi,n}^* K_w^{[n]}\rangle=\sum_{i=0}^n {n \choose i}\psi^{(i)}(w)(f^{(n)}\circ\phi)^{(n-i)}(w)
    \end{equation}
    If possible, let `$0$' is a zero of $\psi$ of multiplicity $>n$. Then $\psi^{(i)}(0)=0$ for $i=0, 1, 2, \ldots, n$. Putting $w=0$ in \eqref{n>0}, we get 
    \begin{equation*}
        \langle f,D_{\psi,\phi,n}^* K_0^{[n]}\rangle=0 \ \mbox{ for all $f\in H^2(\mathbb{D})$}.
    \end{equation*}
     This gives $D_{\psi,\phi,n}^*K_0^{[n]}=0$, i.e., $K_0^{[n]}\in\mbox{Kernel}(D_{\psi,\phi,n}^*)$. Again applying Corollary \ref{kernel}, we have 
     \begin{equation*}
         D_{\psi,\phi,n}K_0^{[n]}=0,  \mbox{ i.e., } \psi\cdot\left((K_0^{[n]})^{(n)}\circ\phi\right)=0
     \end{equation*}
     This gives $(n!)^2\psi\equiv0$ which implies $\psi\equiv0$, a contradiction. Hence the result follows. 
    \item To prove that $\phi$ is injective, let us consider two distinct points $w_1$ and $w_2$ in $\mathbb{D}$ satisfying $\phi(w_1)=\phi(w_2)$. Initially, we assume that $w_1$ and $w_2$ are both non-zero. Consider the function $\displaystyle h=\frac{K_{w_{1}}}{\overline{\psi(w_1)}}-\frac{K_{w_{2}}}{\overline{\psi(w_2)}}$. By the condition \eqref{pro41} of the ongoing theorem, $h$ is well-defined and belongs to $H^2(\mathbb{D})$. Now from Lemma \ref{lemman}, it follows that  
\begin{equation*}
    D_{\psi,\phi,n}^*(h) = D_{\psi,\phi,n}^*\left(\frac{K_{w_{1}}}{\overline{\psi(w_1)}}-\frac{K_{w_{2}}}{\overline{\psi(w_2)}}\right) = K_{\phi(w_1)}^{[n]}-K_{\phi(w_2)}^{[n]} =0.
\end{equation*}
Therefore $h\in\mbox{Kernel}(D_{\psi,\phi,n}^{*})$.  Since $D_{\psi,\phi,n}$ is coposinormal, so $h\in \mbox{Kernel}(D_{\psi,\phi,n})$. By Lemma \ref{ker=poly}, we conclude that $h$ is a polynomial of degree less than or equal to $n-1$ on $\mathbb{D}$, which is a contradiction to our construction of the function $h$.\\
Next suppose $w_1=0$, and that $\phi(0)=\phi(w_2)$. Consider two disjoint open neighbourhoods, say $N_0$ of $0$ and $N_{w_2}$ of $w_2$. Since $\phi$ is a nonconstant analytic self-map of $\mathbb{D}$, hence, $\phi(N_0)$ and $\phi(N_{w_2})$ are two open sets in $\mathbb{D}$ containing $\phi(0)$ and $\phi(w_2)$, respectively. As $\phi(0)=\phi(w_2)$, then $\phi(N_0)$ and $\phi(N_{w_{2}})$ must intersect. Let $z_0\in \mathbb{D}$ be such that 
\begin{equation*}
   z_0  \in \phi(N_0)\cap \phi(N_{w_{2}}) \mbox{ and } z_0\neq\phi(0).
\end{equation*} 
Then there exist distinct non-zero elements $w_3\in N_0$ and $w_4\in N_{w_2}$ satisfying $\phi(w_3)=z_0= \phi (w_4)$. A similar reasoning again yields a contradiction from first part. Therefore, $\phi$ must be injective.
    \end{enumerate}
\end{proof}


\section{Remarks on posinormality via the adjoint of $D_{\psi,\phi,n}$}
In this section, we derive the adjoint of the operator $D_{\psi,\phi,n}$. Utilizing this representation, we establish several necessary conditions for its posinormality. \\
Consider the following functions
\begin{equation}\label{phiz sigma}
    \phi(z)=\frac{az+b}{cz+d} \mbox{ and } \sigma(z)=\frac{\overline{a}z-\overline{c}}{-\overline{b}z+\overline{d}}, \ \ z\in\mathbb{D}.
\end{equation}
The map $\phi$ is nonconstant if and only if $ad-bc\neq 0$. Cowen \cite{cowen1988linear} has shown that if $\phi:\mathbb{D}\to\mathbb{D}$ satisfies $ad-bc=1$, then $\sigma$ maps $\mathbb{D}$ into itself. Also from \cite{fatehi2020composition}, $||\sigma||_{\infty}<1$ whenever $||\phi||_{\infty}<1$.
Cowen's adjoint formula for composition operator $C_\phi$ with linear fractional $\phi$ is available in \cite{cowen1988linear} as follows: 
\begin{theorem}\label{Cowen adjoint}
    If $\displaystyle\phi(z)=\frac{az+b}{cz+d}$ is an analytic self-map of $\mathbb{D}$ with $ad-bc=1$. Then on $H^2(\mathbb{D})$, 
    \begin{equation*}
        C_\phi^*=T_gC_\sigma T_h^*
    \end{equation*}
    where $\displaystyle g(z)=\frac{1}{(-\overline{b}z+\overline{d})}$ and $h(z)= cz+d$ are in $H^{\infty}(\mathbb{D})$, and $\sigma(z)=\displaystyle\frac{\overline{a}z-\overline{c}}{-\overline{b}z+\overline{d}}$ is an analytic self-map on $\mathbb{D}$.
\end{theorem}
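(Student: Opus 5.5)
The plan is to verify the identity on the reproducing kernels $K_w$, $w\in\mathbb{D}$. Their linear span is dense in $H^2(\mathbb{D})$, since $\langle f,K_w\rangle=f(w)=0$ for all $w$ forces $f\equiv 0$. Both sides are bounded operators: $C_\phi^*$ because $C_\phi$ is bounded, and $T_gC_\sigma T_h^*$ because $g,h\in H^\infty(\mathbb{D})$ and $\sigma$ is an analytic self-map of $\mathbb{D}$, as granted in the statement (via Cowen's observation that $ad-bc=1$ makes $\sigma$ a self-map). So it suffices to show that the two sides agree on every $K_w$. Linearity and continuity then give equality on all of $H^2(\mathbb{D})$.

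First I compute the left side. For any $f$ we have $\langle f,C_\phi^*K_w\rangle=f(\phi(w))=\langle f,K_{\phi(w)}\rangle$, so $C_\phi^*K_w=K_{\phi(w)}$. Similarly, $\langle f,T_h^*K_w\rangle=h(w)f(w)$, so $T_h^*K_w=\overline{h(w)}\,K_w$. Applying $C_\sigma$ and then $T_g$ gives
\begin{equation*}
(T_gC_\sigma T_h^*K_w)(z)=\frac{\overline{h(w)}\,g(z)}{1-\overline{w}\,\sigma(z)}.
\end{equation*}

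The main step is the algebraic check that this equals $K_{\phi(w)}(z)=\dfrac{1}{1-\overline{\phi(w)}\,z}$. Writing $\overline{h(w)}=\overline{c}\,\overline{w}+\overline{d}$ and clearing the denominator of $\sigma$, the right-hand side above becomes
\begin{equation*}
\frac{\overline{c}\,\overline{w}+\overline{d}}{(-\overline{b}z+\overline{d})-\overline{w}(\overline{a}z-\overline{c})}
=\frac{\overline{c}\,\overline{w}+\overline{d}}{(\overline{c}\,\overline{w}+\overline{d})-(\overline{a}\,\overline{w}+\overline{b})z}.
\end{equation*}
Multiplying the numerator and denominator of $K_{\phi(w)}(z)$ by $\overline{cw+d}$ produces exactly the same expression, because $\overline{\phi(w)}=\overline{(aw+b)}/\overline{(cw+d)}$. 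This division is legitimate since $cw+d\neq 0$ on $\mathbb{D}$, as $\phi$ is analytic there.

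Hence $C_\phi^*K_w=T_gC_\sigma T_h^*K_w$ for every $w\in\mathbb{D}$, and density completes the argument. The only delicate point is the preliminary one: that $g$ is bounded and $\sigma$ maps $\mathbb{D}$ into itself. Both are supplied by Cowen's result, which is where the normalization $ad-bc=1$ enters; the kernel computation itself does not use it.
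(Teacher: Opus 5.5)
Your proof is correct, and it follows the same route the paper takes for its generalization (Theorem~\ref{Cowen}): compute both sides on the kernels $K_w$, obtaining $C_\phi^*K_w=K_{\phi(w)}=\overline{h(w)}\,g\,(K_w\circ\sigma)$, and conclude by density of their span. The paper only cites this statement from Cowen rather than proving it, and your remark that the kernel computation needs just $ad-bc\neq 0$ agrees with the paper's comment after the theorem; as you note, the self-map property of $\sigma$ and the boundedness of $g$ are taken from Cowen's result.
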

Though in Theorem \ref{Cowen adjoint}, Cowen requires $ad-bc=1$, but it is enough to take $ad-bc\neq0$. Also, Fatehi and Hammond in \cite{fatehi2020composition} have provided a formula involving the adjoint of $D_\phi$ for the nonconstant self-map of $\mathbb{D}$ defined by $\phi (z)=rz$ where $r$ is a real number. In Example 3 of \cite{fatehi2021normality}, they have given a formula for $D_{\psi,\phi}^*$ where $\psi(z)=z$ and $\phi(z)=az$, where $a\in\mathbb{D}\setminus\{0\}$ and $z\in\mathbb{D}$. Recently, Lo and Loh \cite{Lo} have provided a formula related to $D_{\psi,\phi,n}^*$ in the case $\psi\equiv sK_{\sigma(0)} ^{[n]}$, where $s\in\mathbb{C}$, $\phi$ and $\sigma$ are defined in \eqref{phiz sigma}.

Now, we derive similar type of formula as in Theorem \ref{Cowen adjoint} for the weighted composition-differentiation operator $D_{\psi,\phi,n}$ for particular choice of $\psi(z)$ and a linear fractional self-map $\phi$.

\begin{theorem}\label{Cowen}
    Let $\psi(z)=z^n\psi_1(z)$ for $z\in\mathbb{D}, \ \psi_1\in H^{\infty}(\mathbb{D})$ and $\phi, \sigma$ be nonconstant analytic self-maps of $\mathbb{D}$ as defined in \eqref{phiz sigma} with $\|\phi\|_{\infty}<1$. Then on $H^2(\mathbb{D})$,    $$D_{\psi,\phi,n}^*=T_gD_{\sigma,n}T_h^*$$
where $n\in\mathbb{N},\ \displaystyle g(z)=\frac{z^n}{(-\overline{b}z+\overline{d})^{n+1}}$ and $h(z)=\psi_1(z)(cz+d)^{n+1}$ are in $H^\infty(\mathbb{D})$.
\end{theorem}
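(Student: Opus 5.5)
The plan is to prove the identity by testing both sides on reproducing kernels. Both sides are bounded operators on $H^2(\mathbb{D})$, and $\{K_w : w\in\mathbb{D}\}$ spans a dense subspace. So it suffices to show $D_{\psi,\phi,n}^*K_w = T_gD_{\sigma,n}T_h^*K_w$ for every $w\in\mathbb{D}$.

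First I would check boundedness of the right-hand side.
\begin{enumerate}
\item For the linear fractional self-map $\phi$ one has $d\neq 0$ and $|b|<|d|$, as in Cowen's setting for Theorem \ref{Cowen adjoint}. Hence $-\overline{b}z+\overline{d}$ is bounded away from $0$ on $\mathbb{D}$, and $g\in H^\infty(\mathbb{D})$.
\item $h=\psi_1\cdot(cz+d)^{n+1}$ is in $H^\infty(\mathbb{D})$ because $\psi_1$ is.
\item By the remark after \eqref{phiz sigma}, $\|\sigma\|_\infty<1$ whenever $\|\phi\|_\infty<1$. Therefore $D_{\sigma,n}$ is bounded.
\end{enumerate}
The left-hand side $D_{\psi,\phi,n}$ is bounded since $\psi\in H^\infty(\mathbb{D})$ and $\|\phi\|_\infty<1$.

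On the left side, Lemma \ref{lemman} gives
\[
D_{\psi,\phi,n}^*K_w=\overline{\psi(w)}K^{[n]}_{\phi(w)}=\overline{w}^n\,\overline{\psi_1(w)}\,\frac{n!\,z^n}{(1-\overline{\phi(w)}z)^{n+1}} .
\]

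On the right side, the standard identity $T_h^*K_w=\overline{h(w)}K_w$ gives $T_h^*K_w=\overline{\psi_1(w)}\,\overline{(cw+d)}^{\,n+1}K_w$. Next, $D_{\sigma,n}K_w = n!\,\overline{w}^n/(1-\overline{w}\sigma)^{n+1}$, and multiplying by $g$ yields
\[
\frac{n!\,\overline{w}^n z^n}{\bigl((-\overline{b}z+\overline{d})-\overline{w}(\overline{a}z-\overline{c})\bigr)^{n+1}} .
\]

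The key algebraic step is the factorization
\[
(-\overline{b}z+\overline{d})-\overline{w}(\overline{a}z-\overline{c})=\overline{(cw+d)}-z\,\overline{(aw+b)}=\overline{(cw+d)}\bigl(1-\overline{\phi(w)}z\bigr).
\]
Substituting it, the factor $\overline{(cw+d)}^{\,n+1}$ from $T_h^*K_w$ cancels exactly. We are left with $\overline{w^n\psi_1(w)}\,n!\,z^n/(1-\overline{\phi(w)}z)^{n+1}$, which equals the left side.

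The computation itself is routine. The only points needing care are the cancellation above, and verifying that $cw+d\neq0$ on $\mathbb{D}$ (which holds since $\phi$ is analytic on $\mathbb{D}$) and that $|b|<|d|$. Those side conditions are where I expect the main, though minor, obstacle. With them in hand, density of the kernels and continuity of both operators finish the proof.
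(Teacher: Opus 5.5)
Your proposal is correct and follows essentially the same route as the paper. Both evaluate $T_gD_{\sigma,n}T_h^*$ on the kernels $K_w$, use the factorization $(-\overline{b}z+\overline{d})-\overline{w}(\overline{a}z-\overline{c})=\overline{(cw+d)}\bigl(1-\overline{\phi(w)}z\bigr)$ to cancel the factor $\overline{(cw+d)}^{\,n+1}$ coming from $\overline{h(w)}$, match the result with Lemma \ref{lemman}, and conclude by density of the kernels. You also spell out the side conditions more explicitly than the paper's ``easy to check'' (note that $|b|<|d|$ is simply $|\phi(0)|<1$).
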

\begin{proof}
It is easy to check that the functions $g(z)$ and $h(z)$ are in $H^\infty(\mathbb{D})$. From the given conditions, the operators $D_{\psi,\phi,n}$ and $D_{\sigma,n}$ are bounded on $H^2(\mathbb{D})$. Starting from the right hand side, we have
   \begin{eqnarray*}
       (T_gD_{\sigma,n} T_h^*K_w)(z)&=& g(z)D_{\sigma,n}\left(\overline{h(w)}K_w(z)\right)\\
       &=& \overline{h(w)}g(z)D_{\sigma,n} (K_w (z))\\
       &=& \overline{h(w)}g(z)K_w^{(n)}(\sigma(z))\\
       &=&\overline{h(w)}g(z)\frac{n!\overline{w}^n}{(1-\overline{w}\sigma(z))^{n+1}}\\
       &=& \overline{h(w)}g(z)\frac{n!\overline{w}^n}{\left(1-\overline{w}\displaystyle\left(\frac{\overline{a}z-\overline{c}}{-\overline{b}z+\overline{d}}\right)\right)^{n+1}}\\
       &=& \overline{h(w)}g(z)\frac{n!\overline{w}^n(-\overline{b}z+\overline{d})^{n+1}}{(-\overline{b}z+\overline{d}-\overline{aw}z+\overline{cw})^{n+1}}\\
       &=&\overline{h(w)}g(z)\frac{n!\overline{w}^n(-\overline{b}z+\overline{d})^{n+1}}{(\overline{cw}+\overline{d})^{n+1}\left\{1-\left(\frac{\overline{aw}+\overline{b}}{\overline{cw}+\overline{d}}\right)z\right\}^{n+1}}\\
       &=&\overline{h(w)}g(z)\frac{n!\overline{w}^{n}(-\overline{b}z+\overline{d})^{n+1}}{(\overline{cw}+\overline{d})^{n+1}\left(1-\overline{\phi(w)}z\right)^{n+1}}\\
        &=&\frac{\overline{w}^n\ \overline{\psi_1(w)} \ n!z^n}{(1-\overline{\phi(w)}z)^{n+1}}\\
        &=&\frac{\overline{\psi(w)}\ n!z^n}{(1-\overline{\phi(w)}z)^{n+1}}\\
        &=&\overline{\psi(w)}K_{\phi(w)}^{[n]}(z)\\
        &=&D_{\psi,\phi,n}^*(K_w)(z)
   \end{eqnarray*}
   follows from Lemma \ref{lemman}. Thus we have,
   $$D_{\psi,\phi,n}^*(K_w)=T_gD_{\sigma,n} T_h^*(K_w)~~~~~~\mbox{for any }~~w\in\mathbb{D}.$$
   Since the span of the reproducing kernel functions $\{K_w:w\in\mathbb{D}\}$ forms a dense subspace of $H^2(\mathbb{D})$ and the above two expressions agree on them, we can say that $D_{\psi,\phi,n}^{*}=T_gD_{\sigma,n} T_h^*$.  
\end{proof}

\begin{theorem}\label{phi(beta)=0}
    Let $\psi(z)=\lambda z^n$ for $z\in \mathbb{D}$, $n\in\mathbb{N}$, $\lambda\in\mathbb{C}\setminus\{0\}$ and $\phi$ be a nonconstant analytic self-map on $\mathbb{D}$ defined as in \eqref{phiz sigma} with $\|\phi\|_{\infty}<1$. Then the range of $D_{\psi,\phi,n}^{*}$ contains a function of the form $f(z)=\mu z^n$ for some $\mu\in \mathbb{C}$ if and only if $\phi(\beta)=0$ for some $\beta\in\mathbb{D}$.
\end{theorem}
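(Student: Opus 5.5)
We read the statement with $\mu\neq 0$, since $\mu=0$ is trivially attained by $D_{\psi,\phi,n}^*0$. The plan is to compute directly how $D_{\psi,\phi,n}^*$ acts on a few specific vectors, using Lemma \ref{lemman} and a duality argument, rather than working through the factorization of Theorem \ref{Cowen}.

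\emph{Key computation.} For every $f\in H^2(\mathbb{D})$, $\langle f, D_{\psi,\phi,n}^*z^n\rangle$ is the $n$-th Taylor coefficient of $\lambda z^n f^{(n)}(\phi(z))$. That coefficient is $\lambda f^{(n)}(\phi(0))=\langle f,\overline{\lambda}K^{[n]}_{\phi(0)}\rangle$, so
\[
D_{\psi,\phi,n}^*z^n=\overline{\lambda}\,K^{[n]}_{\phi(0)} .
\]

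\emph{Sufficiency.} Suppose $\phi(\beta)=0$ for some $\beta\in\mathbb{D}$.
\begin{enumerate}
\item If $\beta=0$, the key computation gives $D_{\psi,\phi,n}^*z^n=\overline{\lambda}K_0^{[n]}=\overline{\lambda}\,n!\,z^n$.
\item If $\beta\neq 0$, Lemma \ref{lemman} gives $D_{\psi,\phi,n}^*K_\beta=\overline{\lambda}\,\overline{\beta}^n K_0^{[n]}=\overline{\lambda}\,\overline{\beta}^n n!\,z^n$.
\end{enumerate}
In both cases $\mu\neq 0$.

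\emph{Necessity.} Suppose $D_{\psi,\phi,n}^*u=\mu z^n$ with $\mu\neq0$. Pair both sides with $K_w$. Using
\[
D_{\psi,\phi,n}K_w=\lambda\, n!\,\overline{w}^n z^n(1-\overline{w}\phi)^{-(n+1)},
\]
we obtain, for all $w\neq 0$,
\[
\bigl\langle z^n(1-\overline{w}\phi)^{-(n+1)},u\bigr\rangle=c:=\frac{\overline{\mu}}{\overline{\lambda}\,n!}\neq0 .
\]
Since $\|\phi\|_\infty<1$, the expansion $(1-\overline{w}\phi)^{-(n+1)}=\sum_k\binom{k+n}{n}\overline{w}^k\phi^k$ converges in $H^\infty$, hence in $H^2$. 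Comparing coefficients of $\overline{w}^k$ as an identity in $\overline w$, continued to $w=0$, gives
\[
\langle z^n,u\rangle=c\neq 0,\qquad \langle z^n\phi^k,u\rangle=0\ \text{ for all } k\ge1 .
\]
So it suffices to show that if $\phi$ has no zero in $\mathbb{D}$, then $z^n$ lies in the closed span $M$ of $\{z^n\phi^k:k\ge1\}$; this contradicts the relations above.

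\emph{Showing $z^n\in M$.} The image $K=\phi(\overline{\mathbb{D}})$ is a closed disc, since $\phi$ is a Möbius map with pole outside $\overline{\mathbb{D}}$. If $0\notin K$, then $\mathbb{C}\setminus K$ is connected, so Runge's theorem gives polynomials $p_j$ with $p_j(\zeta)\to1/\zeta$ uniformly on $K$. Then $z^n\phi\,p_j(\phi)\to z^n$ uniformly on $\overline{\mathbb{D}}$, hence in $H^2$, so $z^n\in M$.

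\emph{Main obstacle.} The hard case is when the zero $z_0=-b/a$ of $\phi$ lies on $\partial\mathbb{D}$, so $0\in\partial K$ and uniform approximation fails. Here we would use $H^2$-approximation instead. Write $z^n\phi^k=z^n\cdot\frac{a(z-z_0)}{cz+d}\cdot\phi^{k-1}$. First show that the closed span of $\{\phi^{k-1}:k\ge1\}$ contains all polynomials in $z$: $z=\phi^{-1}$ composed with $\phi$ is linear fractional, and $\phi^{-1}$ is holomorphic near the disc $K$, so Runge applies on $K$, which is allowed since $0\in K$ is fine here. Then $M$ contains $z^n\frac{a(z-z_0)}{cz+d}\,\mathbb{C}[z]$. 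Since $\frac{z-z_0}{cz+d}$ is outer, Beurling's theorem makes $\frac{z-z_0}{cz+d}\,\mathbb{C}[z]$ dense in $H^2$. Hence $M\supseteq\overline{z^nH^2}\ni z^n$, giving the desired contradiction.

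If this density step proves delicate, the fallback is to prove directly that the functions $(z-z_0)q_j(z)\to1$ in $H^2$ for suitable polynomials $q_j$. Either way, the conclusion is that $\phi(\beta)=0$ for some $\beta\in\mathbb{D}$.
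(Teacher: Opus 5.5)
Your proof is correct, but for the necessity direction it takes a genuinely different route from the paper.

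\textbf{The paper's route.} The paper uses the factorization $D_{\psi,\phi,n}^*=T_gD_{\sigma,n}T_h^*$ of Theorem~\ref{Cowen}. Setting $q=T_h^*\rho$, the equation $D_{\psi,\phi,n}^*\rho=\mu z^n$ becomes $q^{(n)}\circ\sigma=\mu(-\overline{b}z+\overline{d})^{n+1}$. Inverting the M\"obius map $\sigma$ and integrating $n$ times shows that $q$ is a nonzero multiple of $(\overline{a}+\overline{b}u)^{-1}$ plus a polynomial. Since $q\in H^2(\mathbb{D})$, the pole $-\overline{a}/\overline{b}$ must lie outside $\overline{\mathbb{D}}$; the boundary case is excluded because the Taylor coefficients would have constant modulus. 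Hence $-b/a\in\mathbb{D}$.

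\textbf{Your route.} You never use the factorization. Pairing with $K_w$ and expanding $(1-\overline{w}\phi)^{-(n+1)}$ turns the equation into $\langle z^n,u\rangle\neq0$ and $u\perp z^n\phi^k$ for $k\geq1$. Necessity then becomes a completeness question, which you settle with Runge's theorem and, when the zero lies on $\partial\mathbb{D}$, with Beurling's theorem applied to the outer function $\phi$.

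\textbf{Comparison.} The paper's argument is elementary and explicit, but it depends on Theorem~\ref{Cowen} and on integrating one specific rational function. Yours needs heavier function theory, but it exposes the mechanism, and your reduction is actually an equivalence for every $\phi$ with $\|\phi\|_\infty<1$. Indeed,
$(D_{\psi,\phi,n}^*u)(w)=\overline{\lambda}\,n!\sum_{k\geq0}\binom{k+n}{n}\overline{\langle z^n\phi^k,u\rangle}\,w^{n+k}$.
So a nonzero multiple of $z^n$ lies in the range exactly when $z^n\notin M$, i.e.\ when $1\notin\overline{\mbox{span}}\{\phi^k:k\geq1\}$. For the converse, take $u=z^n-P_Mz^n$ with $P_M$ the orthogonal projection onto your $M$. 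A zero $\beta\in\mathbb{D}$ of $\phi$ obviously blocks $1$ from this span, since every $\phi^k$ with $k\geq1$ vanishes at $\beta$. The linear fractional hypothesis is needed only to decide when $1$ lies in the span. There is one harmless slip: your constant should be $c=\overline{\mu}/(\lambda\,n!)$, not $\overline{\mu}/(\overline{\lambda}\,n!)$.

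\textbf{Sufficiency.} Keep your separate treatment of $\beta=0$. The statement is only meaningful for $\mu\neq0$. The paper's sufficiency argument rests on $D_{\psi,\phi,n}^*K_\beta=\overline{\lambda\beta^n}\,n!\,z^n$, which gives $\mu=0$ when $\phi(0)=0$ (then $\beta=0$ is the only zero). Your identity $D_{\psi,\phi,n}^*z^n=\overline{\lambda}K^{[n]}_{\phi(0)}$ covers exactly that case.
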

\begin{proof}
    Suppose $\phi(\beta)=0$ for some $\beta\in\mathbb{D}$. Then from Lemma \ref{lemman}, we have
    \begin{eqnarray*}
        (D_{\psi,\phi,n}^{*}K_\beta)(z)&=&\overline{\psi(\beta)}K_{\phi(\beta)}^{[n]}(z)\\
        &=&\overline{\psi(\beta)}K_0^{[n]}(z)\\
        &=&\overline{\psi(\beta)}\ n!z^n\\
        &=&\mu z^n 
    \end{eqnarray*}
    where $\mu=\overline{\psi(\beta)}\ n! = \overline{\lambda\beta^n}\ n!$.\\
    Conversely, let $f(z)=\mu z^n\in \mbox{Range}(D_{\psi,\phi,n}^{*})$. Suppose $\phi$ and $\sigma$ are defined as in \eqref{phiz sigma}. So, if $b=0$, then $\phi(0)=0$. Let us consider $b\neq0$. In this case, our aim is to show that the zero $-\displaystyle\frac{b}{a}$ must lie in $\mathbb{D}$.\\
    By Theorem \ref{Cowen} (taking $\psi_1\equiv \lambda)$, there exists a function $\rho\in H^2(\mathbb{D})$ such that 
    \begin{eqnarray*}
        (T_gD_{\sigma,n} T_h^*\rho)(z) &=& \mu z^n~~,~z\in\mathbb{D}\\
        \mbox{i.e., } \ \displaystyle\frac{z^n}{(-\overline{b}z+\overline{d})^{n+1}}(D_{\sigma,n} T_h^{*}\rho)(z) &=& \mu z^n\\
        \mbox{i.e., } \ (D_{\sigma,n} T_h^{*}\rho)(z) &=& \mu(-\overline{b}z+\overline{d})^{n+1}.
    \end{eqnarray*}
    Let $q=T_h^{*}\rho$. So, the above equation reduces to 
    \begin{eqnarray*}
        (D_{\sigma,n} q)(z)&=& \mu(-\overline{b}z+\overline{d})^{n+1}\\
        \mbox{i.e., } \ q^{(n)}(\sigma(z)) &=&\mu(-\overline{b}z+\overline{d})^{n+1}
    \end{eqnarray*}
    Let $u=\sigma(z)$ where $\sigma(z)$ is defined as in \eqref{phiz sigma}. Then $u=\displaystyle\frac{\overline{a}z-\overline{c}}{-\overline{b}z+\overline{d}}$ and thus $z=\displaystyle\frac{\overline{c}+\overline{d}u}{\overline{a}+\overline{b}u}$. Hence, we get
    \begin{align*}
        q^{(n)}(u)&=\mu\left\{-\overline{b}\left(\displaystyle\frac{\overline{c}+\overline{d}u}{\overline{a}+\overline{b}u}\right)+\overline{d}\right\}^{n+1}\\
        &=\mu\left(\displaystyle\frac{-\overline{bc}-\overline{bd}u+\overline{ad}+\overline{bd}u}{\overline{a}+\overline{b}u}\right)^{n+1}\\
        &=\mu\left(\displaystyle\frac{\overline{ad}-\overline{bc}}{\overline{a}+\overline{b}u}\right)^{n+1}
    \end{align*}
    A straightforward calculation yields
    \begin{equation*}
        q(u) =(-1)^n\displaystyle\frac{\mu(\overline{ad}-\overline{bc})^{n+1}}{n!\overline{b}^n(\overline{a}+\overline{b}u)}+P_{n-1}(u) \  \mbox{for all $u\in\mbox{Range}(\sigma)$}
    \end{equation*}
 where $P_{n-1}$ is a polynomial in $u$ of degree $n-1$ arising from the constants of integration. Define $\xi$ as,
 \begin{equation*}
     \xi(u)=(-1)^n\displaystyle\frac{\mu(\overline{ad}-\overline{bc})^{n+1}}{n!\overline{b}^n(\overline{a}+\overline{b}u)}+P_{n-1}(u) \ \mbox{ for } \ u\in\mathbb{D}\setminus \left\{-\frac{\overline{a}}{\overline{b}}\right\}.
 \end{equation*}
  Thus, $q$ and $\xi$ agree on the non-empty open subset $\sigma(\mathbb{D)}$ of $\mathbb{D}$. Since $q$ is analytic on $\mathbb{D}$ and $\xi$ is analytic on $\mathbb{D}$ except possibly at $-\displaystyle\frac{\overline{a}}{\overline{b}}$, by identity theorem, the function $q$ and $\xi$ must agree on $\mathbb{D}$ except possibly at this point.\\
    However, $-\displaystyle\frac{\overline{a}}{\overline{b}}$ is a pole of $\xi~(\mbox{since }\overline{ad}-\overline{bc} ~~\mbox{is non-zero)}$ and therefore $-\displaystyle\frac{\overline{a}}{\overline{b}}$ can not lie in $\mathbb{D}$. If $-\frac{\overline{a}}{\overline{b}}\in\partial \mathbb{D}$, then $\displaystyle\left|\frac{\overline{a}}{\overline{b}}\right|=1\implies \left|\frac{\overline{b}}{\overline{a}}\right|=1$ and consequently $q$ would have Maclaurin series
    $$q(z)=\displaystyle\frac{(-1)^n\mu(\overline{ad}-\overline{bc})^{n+1}}{n!\overline{ab}^n}\sum_{k=0}^\infty(-1)^k\left(\frac{\overline{b}}{\overline{a}}\right)^k z^k     +P_{n-1}(z).$$
    Since $\displaystyle\sum_{k=0}^\infty\left|\frac{\overline{b}}{\overline{a}}\right|^{2k}=\infty$, so $q$ would not belong to $H^2(\mathbb{D})$, a contradiction. Thus $-\displaystyle\frac{\overline{a}}{\overline{b}}$ must lie outside the closed unit disk. So, the zero $-\displaystyle\frac{b}{a}$ of $\phi$ lies in $\mathbb{D}$.
    \end{proof}

As a consequence of the Theorem \ref{phi(beta)=0}, we have the following result.
\begin{Corollary}\label{Corolarry on pos}
         Let $\psi(z)=\lambda z^n$ for $z\in\mathbb{D}$, $n\in\mathbb{N}$, $\lambda\in\mathbb{C}\setminus\{0\}$ and $\phi$ be a nonconstant analytic self-map of $\mathbb{D}$ defined by \eqref{phiz sigma} with $\|\phi\|_{\infty}<1$. If $D_{\psi,\phi,n}$ is posinormal on $H^2(\mathbb{D})$, then $\phi(\beta)=0$ for some $\beta\in\mathbb{D}$.
    \end{Corollary}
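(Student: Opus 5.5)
The plan is to reduce the corollary to Theorem \ref{phi(beta)=0} by producing, from posinormality, an element of $\mbox{Range}(D_{\psi,\phi,n}^*)$ of the form $\mu z^n$ with $\mu\neq 0$.

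First I would exhibit $\psi$ itself as an element of $\mbox{Range}(D_{\psi,\phi,n})$, exactly as in the proof of Theorem \ref{pos D_psi phi_n}. Take $f(z)=\frac{z^n}{n!}$. Then $f^{(n)}\equiv 1$, so
$$D_{\psi,\phi,n}f=\psi\cdot(f^{(n)}\circ\phi)=\psi=\lambda z^n.$$
Thus $\lambda z^n\in\mbox{Range}(D_{\psi,\phi,n})$. Boundedness holds because $\psi\in H^\infty(\mathbb{D})$ and $\|\phi\|_\infty<1$.

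Next, since $D_{\psi,\phi,n}$ is posinormal, Theorem \ref{Rahly} gives $\mbox{Range}(D_{\psi,\phi,n})\subseteq\mbox{Range}(D_{\psi,\phi,n}^*)$. Hence the function $\lambda z^n$, which has the form $\mu z^n$ with $\mu=\lambda$, lies in $\mbox{Range}(D_{\psi,\phi,n}^*)$.

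Finally, the hypotheses on $\psi$ and $\phi$ here are exactly those of Theorem \ref{phi(beta)=0}, so its "only if" direction yields $\beta\in\mathbb{D}$ with $\phi(\beta)=0$.

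There is no real obstacle. The one point to check is that the converse argument of Theorem \ref{phi(beta)=0} needs $\mu\neq0$: with $\mu=0$ the function $\xi$ would have no pole and the argument would give no information. Our choice $\mu=\lambda$ is nonzero by hypothesis, so this is satisfied.
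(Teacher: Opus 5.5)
Your proposal is correct and follows the paper's proof exactly: take $f(z)=z^n/n!$ so that $D_{\psi,\phi,n}f=\psi=\lambda z^n$, use Theorem~\ref{Rahly} to place $\lambda z^n$ in $\mbox{Range}(D_{\psi,\phi,n}^*)$, and conclude with Theorem~\ref{phi(beta)=0}. Your remark that the argument needs $\mu=\lambda\neq 0$ is a worthwhile addition the paper omits. Theorem~\ref{phi(beta)=0} as stated nominally allows $\mu=0$, and since the zero function always lies in the range, its ``only if'' direction would fail in that case.
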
\label{cl2}
    \begin{proof}
        Let $f(z)=\displaystyle\frac{z^n}{n!}$ for all $z\in\mathbb{D}$. Then 
        \begin{equation*}
            (D_{\psi,\phi,n}f)(z)=\psi(z)\cdot f^{(n)}(\phi(z))=\psi(z)\cdot 1=\psi(z).
        \end{equation*}
        This gives $\psi\in\textnormal{Range}(D_{\psi,\phi,n})$. Since $D_{\psi,\phi,n}$ is posinormal, hence by Theorem \ref{Rahly},
        $\psi(z)=\lambda z^n\in\textnormal{Range}(D_{\psi,\phi,n}^*)$. Therefore, by  Theorem \ref{phi(beta)=0}, we must have $\phi(\beta)=0$ for some $\beta\in\mathbb{D}$.
    \end{proof}


A similar condition for posinormality is obtained for $C_{\phi}$ in Theorem \ref{C_phi posinormal}. From Corollary \ref{Corolarry on pos}, we derive the following result :
    \begin{Corollary}\label{cl16}
        Let $\psi(z)=\lambda z^n$ for $z\in\mathbb{D}$, $n\in\mathbb{N}$, $\lambda\in\mathbb{C}\setminus\{0\}$ and $\phi(z)=az+b$ with $|a|+|b|<1$ and $|a|<|b|$. Then $D_{\psi,\phi,n}$ is not posinormal.
    \end{Corollary}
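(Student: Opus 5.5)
The plan is to argue by contradiction using Corollary \ref{Corolarry on pos}. Suppose $D_{\psi,\phi,n}$ were posinormal. Then $\phi$ would have to vanish at some $\beta\in\mathbb{D}$. We will show that the hypotheses on $a$ and $b$ rule this out.

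First we check that the corollary applies. The map $\phi(z)=az+b$ is linear fractional of the form \eqref{phiz sigma} with $c=0$ and $d=1$. Since $|a|<|b|$ and $|a|+|b|<1$ force $0<|b|<1$, we must verify $a\neq 0$ in order to have $ad-bc=a\neq0$, i.e.\ for $\phi$ to be nonconstant. If $a=0$, then $\phi\equiv b$ is constant. In that case we would instead argue directly: $D_{\psi,\phi,n}f=\lambda z^n f^{(n)}(b)$, so the range is spanned by $z^n$. Posinormality would then require $z^n\in\textnormal{Range}(D_{\psi,\phi,n}^*)$. By Lemma \ref{lemman}, $D^*_{\psi,\phi,n}K_w=\overline{\lambda w^n}K_b^{[n]}$, so the range of the adjoint is spanned by $K_b^{[n]}=n!z^n/(1-\overline{b}z)^{n+1}$, which is not a multiple of $z^n$ because $b\neq0$. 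So the constant case also fails. Presumably the intended reading is $a\neq0$, and we will remark on this.

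For $a\neq0$ we have $\|\phi\|_\infty\le |a|+|b|<1$, so the norm hypothesis of Corollary \ref{Corolarry on pos} holds. Posinormality would therefore yield $\beta\in\mathbb{D}$ with $a\beta+b=0$, i.e.\ $\beta=-b/a$. But $|\beta|=|b|/|a|>1$, so $\beta\notin\mathbb{D}$, which is the desired contradiction.

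There is no serious obstacle. The only delicate point is confirming that $\phi$ is a nonconstant self-map with $\|\phi\|_\infty<1$, together with the degenerate case $a=0$ just discussed.
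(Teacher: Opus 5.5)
Your argument is correct and follows the paper's route. The paper derives this statement directly from Corollary~\ref{Corolarry on pos}: posinormality would force a zero of $\phi$ in $\mathbb{D}$, but the only zero $-b/a$ of $\phi(z)=az+b$ has modulus $|b|/|a|>1$. Your separate treatment of the constant case $a=0$ is also correct and closes a small gap the paper leaves open, since Corollary~\ref{Corolarry on pos} requires $\phi$ to be nonconstant. In that case the range of the adjoint is the closed line spanned by $K_b^{[n]}$, which by continuity extends from the kernels to all of $H^2(\mathbb{D})$, and this line does not contain $z^n$ because $b\neq 0$.
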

    \begin{remark}
         For posinormality of $D_{\psi,\phi,n}$, we obtain two necessary conditions as established in Theorem \ref{pos D_psi phi_n} and Corollary \ref{Corolarry on pos}. 
       Let $\psi(z)=2z$ and $\displaystyle\phi(z)=\frac{z+2}{5} , z\in\mathbb{D}$. Clearly, $\psi(0)=0$. From Corollary \ref{cl16}, we conclude that $D_{\psi,\phi}$ is not posinormal. Therefore, the converse of Theorem \ref{pos D_psi phi_n} is not true in general.
    \end{remark}

\section{Declarations}
\textit{Acknowledgements:} Mr. Gour Hait would like to thank UGC, Govt. of India for the financial support (NTA Ref. No. 231610156338) in the form of fellowship.\\
\textit{Author Contributions:} All the authors contributed equally to this manuscript and reviewed it. \\
 \textit{Data Availability :} No datasets were generated or analysed during the current study. \\
\textit{Conflict of interest:} There is no competing interest.\\

\bibliographystyle{amsplain}
\bibliography{reference}

\end{document}